\numberwithin{equation}{section}
\newcommand{\HA}{\mathcal{A}}
\newcommand{\D}{\mathbb{D}}
\newcommand{\C}{\mathbb{C}}
\newcommand{\N}{\mathbb{N}}
\newcommand{\T}{\mathbb{T}}
\theoremstyle{plain}
\newtheorem{theorem}{Theorem}[section]
\newtheorem{lemma}[theorem]{Lemma}
\newtheorem{remark}[theorem]{Remark}
\newtheorem{prop}[theorem]{Proposition}
\newtheorem{corollary}[theorem]{Corollary}
\newtheorem{definition}[theorem]{Definition}
\theoremstyle{definition}
\begin{document}

\date{\today}

\author[Shuaibing Luo]{Shuaibing Luo}

\address{Department of Mathematics, University of Tennessee, Knoxville, TN 37996}
\email{luo@math.utk.edu}
\subjclass[2010]{Primary 46J15; Secondary 30H05, 32A38, 46J30}
\keywords{$D(\mu)$ spaces; corona theorem; Bass stable rank.}

\title[Corona Theorem and Stable Rank]
{The corona theorem and Bass stable rank for $M(D(\sum_{i=1}^k a_i \delta_{\zeta_i}))$}
\maketitle

\begin{abstract}
In this paper, we prove the corona theorem for $M(D(\mu_k))$ in two different ways, where $\mu_k = \sum_{i=1}^k a_i \delta_{\zeta_i}$. Then we prove that the Bass stable rank of $M(D(\mu_k))$ is one.

\end{abstract}

\bigskip

\section{Introduction}
Let $\D = \{z \in \C: |z| < 1\}$ be the unit disc. Let $\mu$ be a nonnegative Borel measure on the boundary $\T$ of the unit disc. Let $\varphi_\mu$ be the harmonic function
\begin{equation*}
\varphi_\mu (z) = \int_\T \frac{1-|z|^2}{|\zeta -z|^2} d\mu(\zeta).
\end{equation*}

The Dirichlet type space $D(\mu)$ is defined as the space of all analytic functions on $\D$ such that
\begin{equation*}
\int_\D |f'(z)|^2 \varphi_\mu (z) dA(z)
\end{equation*}
is finite. For any $f \in D(\mu)$, $\|f\|_{D(\mu)}^2 := \|f\|_{H^2(\D)}^2 + \int\limits_\D |f'(z)|^2 \varphi_\mu (z) dA(z)$. When $\mu = \frac{dt}{2\pi}$, $D(\frac{dt}{2\pi})$ is the Dirichlet space $D$.

Dirichlet type spaces were introduced by Richter in \cite{R 91} when studying analytic two-isometries. In \cite{RS 91}, Richter and Sundberg showed that if $f \in D(\delta_\zeta)$, then
\begin{equation*}
D_\zeta (f) = \int_\D |f'(z)|^2 \frac{1-|z|^2}{|\zeta -z|^2} dA(z), \quad \zeta \in \T
\end{equation*}
which is a convenient tool in studying these spaces, where $D_\zeta (f) : = \|\frac{f-f(\zeta)}{z-\zeta}\|_{H^2(\D)}^2$ is called the local Dirichlet integral of $f$ at $\zeta$. Thus, for any $f \in D(\mu)$, $\|f\|_{D(\mu)}^2 = \|f\|_{H^2(\D)}^2 + \int_\T D_\zeta (f) d\mu (\zeta) = \|f\|_{H^2(\D)}^2 + \int_\T \|\frac{f-f(\zeta)}{z-\zeta}\|_{H^2(\D)}^2 d\mu (\zeta)$.

In this paper, we will consider $\mu = \sum _{i=1}^{k} a_i \delta_{\zeta_i}:=\mu_k$, where $a_i$'s are positive numbers, $\zeta_i$'s are in $\T$. Let $M(D(\mu_k))$ be the space of multipliers of $D(\mu_k)$, that is
\begin{align*}
M(D(\mu_k))= \{\phi \in D(\mu_k): \phi f \in D(\mu_k), \forall f \in D(\mu_k)\}.
\end{align*}
Also we will consider $D_{l^2}(\mu_k)$, or $\oplus_1^\infty D(\mu_k)$, which can be considered as $l^2$-valued $D(\mu_k)$ space.

Given $\{\varphi_j\}_{j=1}^\infty \subseteq M(D(\mu_k))$, we let $\Phi(z) = (\varphi_1(z), \varphi_2(z),\ldots)$. We use $M_\Phi$ to denote the (column) operator from $D(\mu_k)$ to $\oplus_1^\infty D(\mu_k)$ defined by
\begin{align*}
M_\Phi (f) = \{\varphi_j f\}_{j=1}^\infty \quad \text{for} ~ f \in D(\mu_k).
\end{align*}

The famous corona theorem goes back to Lennart Carleson. In 1962 Carleson \cite{C 62} proved the absence of a corona in the maximal ideal space of $H^\infty(\D)$ by showing that if $\{\varphi_1,...,\varphi_n\}$ is a finite set of functions in $H^\infty(\D)$ satisfying
\begin{equation}\label{e2 1}
\sum \limits_{j=1}^n |\varphi_j(z)|^2 \geq \eta > 0, \quad z \in \D, \quad (\text{Corona condition}).
\end{equation}
then there are functions $\{f_1,...,f_n\} \subseteq H^\infty(\D)$ with
\begin{equation}\label{e2 2}
\sum \limits_{j=1}^n f_j(z)\varphi_j(z) = 1, \quad z \in \D, \quad (\text{Bezout equation}).
\end{equation}
This is also equivalent to say that the unit disc is dense in the maximal ideal space of $H^\infty (\D)$ in the weak* topology. Then it was shown that the corona theorem is also true in $M(D)$, the multiplier of the Dirichlet space $D$ (see Tolokonnikov \cite{T 91}, Xiao \cite{X 98}). In this paper, we wish to prove the corona theorem for $M(D(\mu_k))$ in two ways. The first version is as follows:

\begin{theorem}\label{corothmuk}
The set of multiplicative linear functionals consisting of evaluations at points of $\D$ is dense in the maximal ideal space of $M(D(\mu_k))$.
\end{theorem}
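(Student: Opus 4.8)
The plan is to reduce the statement to the corona theorem (solvability of the Bezout equation) for $M(D(\mu_k))$ by standard Gelfand theory, and then to prove the latter by a $\bar\partial$-argument in the spirit of Wolff's proof of Carleson's theorem, adapted to the multiplier norm of $D(\mu_k)$. For the reduction I would use that $M(D(\mu_k))$ is a unital commutative Banach algebra and that each evaluation $\lambda_z\colon\phi\mapsto\phi(z)$, $z\in\D$, is a character (multipliers are bounded, so $\lambda_z$ is bounded and multiplicative). Then density of $\{\lambda_z\}$ in the Gelfand (weak-$\ast$) topology is equivalent to the corona statement: whenever $\varphi_1,\dots,\varphi_n\in M(D(\mu_k))$ satisfy $\sum_{j=1}^n|\varphi_j(z)|^2\ge\eta>0$ on $\D$, the equation $\sum_{j=1}^n f_j\varphi_j=1$ has a solution with $f_j\in M(D(\mu_k))$. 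Indeed, if density failed there would be a character $\tau$ with a basic neighbourhood $\{\sigma:|\sigma(\varphi_j)-\tau(\varphi_j)|<\varepsilon,\ j\le n\}$ avoiding every $\lambda_z$; replacing $\varphi_j$ by $\varphi_j-\tau(\varphi_j)\cdot 1$ yields multipliers lying in the proper ideal $\ker\tau$ yet satisfying the corona condition, which is incompatible with Bezout solvability. Conversely, if the $\varphi_j$ satisfy the corona condition but lie in a common $\ker\tau$, any net $\lambda_{z_\alpha}\to\tau$ forces $\sum_j|\varphi_j(z_\alpha)|^2\to 0$, a contradiction.

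To solve the Bezout equation I would follow the $\bar\partial$-scheme. Set $g_j=\overline{\varphi_j}/(\sum_i|\varphi_i|^2)$, so that $\sum_j g_j\varphi_j=1$ with $g_j$ smooth but not holomorphic, and look for a holomorphic correction $f_j=g_j+\sum_k b_{jk}\varphi_k$ with $b_{jk}=-b_{kj}$. The antisymmetry gives $\sum_j f_j\varphi_j=1$ automatically, while $\bar\partial f_j=0$ reduces, using $\sum_k\varphi_k\bar\partial g_k=\bar\partial 1=0$, to the scalar equations $\bar\partial b_{jk}=g_j\,\bar\partial g_k-g_k\,\bar\partial g_j$. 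The whole point is to solve each of these with a solution that is a multiplier of $D(\mu_k)$, with norm controlled by $\eta$ and $\max_j\|\varphi_j\|_{M(D(\mu_k))}$. For this I would invoke the characterization of $M(D(\mu_k))$ as those $\phi\in H^\infty$ for which $|\phi'(z)|^2\varphi_{\mu_k}(z)\,dA(z)$ is a Carleson measure for $D(\mu_k)$, and solve $\bar\partial b=\omega$ by a Cauchy/Green potential whose mapping properties are measured against this Carleson condition.

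The main obstacle is precisely this $\bar\partial$-estimate in the multiplier norm: one must show that the forms $\omega=g_j\bar\partial g_k-g_k\bar\partial g_j$ built from the smooth data generate measures that are Carleson for $D(\mu_k)$, and that the chosen solution operator returns a genuine multiplier with the correct bound. The delicate point is the interaction between the uniform ($H^\infty$) part of the estimate and the finitely many local Dirichlet integrals attached to the points $\zeta_i$. The restriction to $\mu_k=\sum_{i=1}^k a_i\delta_{\zeta_i}$ should make this tractable: away from $\{\zeta_1,\dots,\zeta_k\}$ the weight $\varphi_{\mu_k}$ is bounded and the analysis is essentially that of the Hardy space, so one can localize, treat each $\zeta_i$ by the single-point-mass (local Dirichlet) estimates, and patch. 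An alternative I would keep in reserve as the ``second way'' is the operator-theoretic route suggested by the operators $M_\Phi\colon D(\mu_k)\to\oplus_1^\infty D(\mu_k)$: via a Toeplitz-corona/commutant-lifting argument the Bezout equation is solvable once the column operator $M_\Phi$ is bounded below, but there the difficulty migrates to upgrading the pointwise corona condition to that operator lower bound.
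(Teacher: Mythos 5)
Your Gelfand-theoretic reduction is correct: for a unital commutative Banach algebra of functions on $\D$ containing the constants, density of the point evaluations $\{\lambda_z: z\in\D\}$ in the maximal ideal space is equivalent to finite Bezout solvability under the corona condition, and both directions of your argument (the perturbation $\psi_j=\varphi_j-\tau(\varphi_j)1$ one way, the net $\lambda_{z_\alpha}\to\tau$ the other) are sound. The problem is that this reduction is the easy part; the Bezout solvability is the entire content of the theorem, and your proposal does not prove it. The Koszul/Wolff scheme is set up correctly (the antisymmetric correction $b_{jk}$ with $\bar\partial b_{jk}=g_j\bar\partial g_k-g_k\bar\partial g_j$ does make each $f_j$ holomorphic and preserves $\sum_j f_j\varphi_j=1$), but the two estimates on which everything hinges are left unestablished: that the data forms give Carleson-type measures adapted to $D(\mu_k)$, and that a solution operator for $\bar\partial$ returns a bounded function whose derivative measure is Carleson for $D(\mu_k)$, with norm controlled by $\eta$ and $\max_j\|\varphi_j\|_{M(D(\mu_k))}$. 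You identify this yourself as ``the main obstacle,'' and the localize-near-each-$\zeta_i$-and-patch remark is a heuristic, not an argument. For the classical Dirichlet space these estimates are precisely the substance of the Tolokonnikov and Xiao papers; nothing in your proposal supplies their analogue for $D(\mu_k)$. Your fallback Toeplitz-corona route has the same status: you name the needed lower bound for $M_\Phi$ but do not obtain it. As it stands, what you have is a correct reduction plus a plan.

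It is worth knowing that the paper proves the density statement with no hard analysis at all, by a route your proposal never touches. Writing $M(D(\mu_k))=M(D(\mu_{k-1}))\cap D(\delta_{\zeta_k})$, let $H_0$ be the ideal of functions in this algebra vanishing at $\zeta_k$. By the one-point multiplication lemma (Lemma~\ref{mulalgduk}), $H_0$ absorbs multiplication by the much larger algebra: if $g_0\in H_0$ and $f\in M(D(\mu_{k-1}))$ (at the base step, $f\in H^\infty(\D)$), then $fg_0\in H_0$. Consequently any character $m$ of $H_0$ with $m(g_0)\neq 0$ extends to a character $M(f):=m(fg_0)/m(g_0)$ of $M(D(\mu_{k-1}))$, which by the induction hypothesis (Carleson's corona theorem for $H^\infty(\D)$ when $k=1$) is a weak-$\ast$ limit of point evaluations; restricting that net back recovers $m$, and hence all characters of $M(D(\mu_k))$ that do not annihilate $H_0$. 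The single remaining character is evaluation at $\zeta_k$, which is itself the limit of evaluations at $r\zeta_k$ as $r\to 1^-$ (Lemma~\ref{ddel1fuc}); this gives Proposition~\ref{corothmdu2} and the theorem by induction. So the density is inherited from $H^\infty(\D)$ by essentially algebraic means, and the quantitative Bezout statement is then a corollary rather than the engine of the proof. To salvage your route you would have to actually carry out the $\bar\partial$-estimates in the $M(D(\mu_k))$ norm, which is a substantial project; the character-extension argument is both shorter and complete.
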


By the standard Gelfand theory of Banach algebras Theorem \ref{corothmuk} implies:
\begin{corollary}\label{corothmukcor} The following are equivalent:
\begin{itemize}
\item[{\rm (i)}] $\varphi_1,...,\varphi_n \in M(D(\mu_k))$ and there exists a $\eta > 0$ such that
\begin{equation*}
\sum \limits_{j=1}^n |\varphi_j(z)|^2 \geq \eta > 0, \quad z \in \D.
\end{equation*}

\item[{\rm (ii)}] There are functions $b_1,...,b_n \in M(D(\mu_k))$ such that
\begin{equation*}
\sum \limits_{j=1}^n \varphi_j(z)b_j(z) = 1, \quad z \in \D.
\end{equation*}
\end{itemize}
\end{corollary}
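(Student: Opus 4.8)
The plan is to deduce the corollary directly from Theorem~\ref{corothmuk} through the Gelfand theory of the commutative unital Banach algebra $M(D(\mu_k))$, so that the only work is to translate the geometric statement ``point evaluations are dense in the maximal ideal space'' into the algebraic statement about the Bezout equation. First I would record the two facts that make this translation legitimate: that $M(D(\mu_k))$, equipped with the multiplier (operator) norm, is a commutative unital Banach algebra, and that for every $w \in \D$ the evaluation $\chi_w : \phi \mapsto \phi(w)$ is a character with $\chi_w(\phi) = \phi(w)$. The latter is the standard reproducing-kernel computation $M_\phi^\ast k_w = \overline{\phi(w)}\,k_w$, which also yields the estimate $\|\phi\|_\infty \le \|\phi\|_{M(D(\mu_k))}$ that I will use below.

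The easy implication is (ii)~$\Rightarrow$~(i). If $\sum_{j=1}^n \varphi_j b_j \equiv 1$ with $b_j \in M(D(\mu_k))$, then by Cauchy--Schwarz $1 = \bigl|\sum_j \varphi_j(z) b_j(z)\bigr|^2 \le \bigl(\sum_j |\varphi_j(z)|^2\bigr)\bigl(\sum_j |b_j(z)|^2\bigr)$ for every $z \in \D$. Since each $b_j$ is bounded with $\|b_j\|_\infty \le \|b_j\|_{M(D(\mu_k))}$, the second factor is dominated by a constant $C$, and therefore $\sum_j |\varphi_j(z)|^2 \ge 1/C =: \eta > 0$.

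For (i)~$\Rightarrow$~(ii) I would argue via the Gelfand transform. Writing $\widehat{\phi}(\chi) = \chi(\phi)$ for $\chi$ in the maximal ideal space $\mathcal M$, each $\widehat{\varphi_j}$ is weak$^\ast$-continuous on $\mathcal M$, hence so is the function $\chi \mapsto \sum_j |\widehat{\varphi_j}(\chi)|^2$. On the point evaluations this function equals $\sum_j |\varphi_j(w)|^2 \ge \eta$, and by Theorem~\ref{corothmuk} these evaluations are weak$^\ast$-dense in $\mathcal M$; continuity then forces $\sum_j |\widehat{\varphi_j}(\chi)|^2 \ge \eta > 0$ for every $\chi \in \mathcal M$. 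In particular, for each character $\chi$ at least one $\chi(\varphi_j)$ is nonzero, so $\varphi_1, \dots, \varphi_n$ lie in no common maximal ideal. Since in a commutative unital Banach algebra a finite set lies in a common maximal ideal if and only if it fails to generate the whole algebra, the ideal generated by $\varphi_1, \dots, \varphi_n$ must be all of $M(D(\mu_k))$; in particular $1$ belongs to it, which is exactly the existence of $b_1, \dots, b_n \in M(D(\mu_k))$ with $\sum_j \varphi_j b_j = 1$.

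The argument is entirely formal once Theorem~\ref{corothmuk} is granted, so I do not expect a genuine obstacle at this level; the real depth sits in the theorem itself. The one point deserving care is the verification that point evaluations are honest characters with $\widehat{\varphi}(\chi_w) = \varphi(w)$ --- i.e.\ that the abstract maximal ideal space really contains $\D$ as the asserted dense subset --- together with the completeness of $M(D(\mu_k))$ in the multiplier norm. These are routine reproducing-kernel facts, but they are precisely what licenses the continuity-and-density step that drives the proof.
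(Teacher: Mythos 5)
Your proposal is correct and is exactly the argument the paper intends: the paper derives this corollary from Theorem~\ref{corothmuk} with the single remark ``by the standard Gelfand theory of Banach algebras,'' and your write-up (Cauchy--Schwarz with $\|b_j\|_\infty \le \|b_j\|_{M(D(\mu_k))}$ for (ii)~$\Rightarrow$~(i); density of point evaluations plus continuity of the Gelfand transforms, followed by the fact that a proper ideal lies in a maximal ideal which is the kernel of a character, for (i)~$\Rightarrow$~(ii)) is precisely that standard argument spelled out. No gaps; nothing further is needed.
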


Also the corona theorem has been generalized to infinitely many functions in $H^\infty(\D)$ and $M(D)$ (see Rosenblum \cite{R 80}, Tolokonnikov \cite{T 91} and Trent \cite{TT 04}). The infinite version, given by Rosenblum \cite{R 80} and Tolokonnikov \cite{T 91}, can be formulate as follows (see Trent \cite{TT 02}):

\begin{theorem}\label{carlcorona}
Let $\{\varphi_j\}_{j=1}^\infty \subseteq H^\infty(\D)$. Suppose that
\begin{align*}
0 < \epsilon^2 \leq \sum_{j=1} ^\infty |\varphi_j(z)|^2 \leq 1, \quad \text{for all}~ z \in \D.
\end{align*}
Then there exists $\{e_j\}_{j=1}^\infty \subseteq H^\infty(\D)$ such that $\sum_{j=1}^\infty \varphi_j e_j =1$ and $\sup_{z\in \D} \sum_{j=1}^\infty |e_j(z)|^2 \leq \frac{C_0}{\epsilon^2} \ln \frac{1}{\epsilon^2}$, where $C_0$ is a constant.
\end{theorem}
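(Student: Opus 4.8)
The plan is to prove this via the classical $\bar\partial$-method of H\"ormander and Wolff, carried out in the vector-valued ($\ell^2$) setting so that the infinitude of the family is handled uniformly. Write $G(z) = \sum_{j=1}^\infty |\varphi_j(z)|^2$, so that $\epsilon^2 \le G \le 1$ on $\D$. First I would produce a bounded but merely smooth solution of the Bezout equation by setting $\psi_j = \bar\varphi_j / G$; then $\sum_j \varphi_j\psi_j = 1$ and $\sum_j |\psi_j|^2 = 1/G \le \epsilon^{-2}$, but the $\psi_j$ are not holomorphic. The idea is to correct them holomorphically without destroying the Bezout identity.

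To that end I would seek the true solution in the form $e_j = \psi_j - \sum_{k=1}^\infty b_{jk}\varphi_k$, where the functions $b_{jk}$ are required to be antisymmetric, $b_{jk} = -b_{kj}$. Antisymmetry forces $\sum_{j,k}\varphi_j\varphi_k b_{jk} = 0$, so that $\sum_j \varphi_j e_j = \sum_j\varphi_j\psi_j = 1$ holds automatically. The functions $e_j$ are holomorphic precisely when $\bar\partial e_j = 0$, and a short computation using $\sum_k\varphi_k\psi_k = 1$ and $\sum_k \varphi_k\,\bar\partial\psi_k = 0$ (the latter because the $\varphi_k$ are holomorphic) shows this is guaranteed once $b_{jk}$ solves the inhomogeneous equation $\bar\partial b_{jk} = g_{jk}$, where $g_{jk} := \psi_k\,\bar\partial\psi_j - \psi_j\,\bar\partial\psi_k$. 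Since the data $(g_{jk})$ is itself antisymmetric and $\bar\partial$ is linear, any linear solution operator returns an antisymmetric $(b_{jk})$ for free: $b_{kj} = T g_{kj} = T(-g_{jk}) = -b_{jk}$.

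The analytic core is then to solve these $\bar\partial$-equations with quantitative control. I would verify that $d\nu := \big(\sum_{j,k}|g_{jk}(z)|^2\big)(1-|z|^2)\,dA(z)$ is a Carleson measure on $\D$ and estimate its Carleson constant in terms of $\epsilon$. Using $\bar\partial\psi_j = -\bar\varphi_j\,\bar\partial G / G^2$ and $\bar\partial G = \sum_k\varphi_k\,\overline{\varphi_k'}$, one gets $\sum_j|\bar\partial\psi_j|^2 = |\bar\partial G|^2/G^3$, whence $\sum_{j,k}|g_{jk}|^2 \lesssim \big(\sum_j|\psi_j|^2\big)\big(\sum_j|\bar\partial\psi_j|^2\big) \lesssim |\bar\partial G|^2/G^4$; the Green's theorem / Littlewood--Paley estimate applied to $\log(1/G)$, whose range is $[0,\ln(1/\epsilon^2)]$, produces the Carleson bound, with the logarithmic loss $\ln(1/\epsilon^2)$ entering exactly through this range. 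With the Carleson estimate in hand, Wolff's lemma (duality against $H^1$ via Green's theorem, applied to the $\ell^2$-valued form $(g_{jk})_k$) yields, for each $j$, a bounded antisymmetric solution family $(b_{jk})_k$ with $\sup_z \sum_{j,k} |b_{jk}(z)|^2$ controlled by the Carleson constant.

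I expect the main obstacle to be precisely this last quantitative step: tracking the constants through the Carleson-measure estimate and Wolff's integral inequality so that the final norm emerges as $\frac{C_0}{\epsilon^2}\ln\frac{1}{\epsilon^2}$ rather than a worse power of $1/\epsilon$, while simultaneously justifying the interchange of the infinite sums and the convergence of the $\ell^2$-valued $\bar\partial$-problem. Once the $b_{jk}$ are obtained, setting $e_j = \psi_j - \sum_k b_{jk}\varphi_k$ and estimating $\sum_j|e_j|^2 \le 2\sum_j|\psi_j|^2 + 2\sum_j\big|\sum_k b_{jk}\varphi_k\big|^2$ by Cauchy--Schwarz (using $\sum_k|\varphi_k|^2\le 1$) gives $\sup_{z\in\D}\sum_j|e_j|^2 \le \frac{C_0}{\epsilon^2}\ln\frac{1}{\epsilon^2}$, completing the proof.
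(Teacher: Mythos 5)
Before assessing the attempt, note that the paper itself does not prove Theorem \ref{carlcorona}: it is quoted as a known result of Rosenblum and Tolokonnikov, in the quantitative form given by Trent \cite{TT 02}, and is then used as a black box in the proof of Theorem \ref{d1infinp1}. So your attempt must stand on its own as a proof of that cited theorem, and as written it does not: it contains one outright computational error and one genuine gap at precisely the decisive step.

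The computational error: since $\psi_j=\bar\varphi_j/G$ and $\bar\varphi_j$ is anti-holomorphic but not constant, the correct formula is $\bar\partial\psi_j=\overline{\varphi_j'}/G-\bar\varphi_j\,\bar\partial G/G^2$; you dropped the first term. Consequently your identity $\sum_j|\bar\partial\psi_j|^2=|\bar\partial G|^2/G^3$ is false; the correct computation (using $\bar\partial G=\sum_k\varphi_k\overline{\varphi_k'}$ and $\partial G=\overline{\bar\partial G}$) gives
\begin{equation*}
\sum_j|\bar\partial\psi_j|^2=\frac{\sum_j|\varphi_j'|^2}{G^2}-\frac{|\bar\partial G|^2}{G^3}=\frac{\Delta\log G}{4G},
\end{equation*}
and the term you lost, $\sum_j|\varphi_j'|^2/G^2$, dominates the one you kept, since Cauchy--Schwarz gives $|\bar\partial G|^2\le G\sum_j|\varphi_j'|^2$. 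The gap: the step you yourself defer as ``the main obstacle'' is exactly where the stated estimate lives, and the scheme you outline cannot close it. Even after the correction, the natural bound is $\sum_{j,k}|g_{jk}|^2\le 4\bigl(\sum_j|\psi_j|^2\bigr)\bigl(\sum_j|\bar\partial\psi_j|^2\bigr)\le \Delta\log G/G^2\le\epsilon^{-4}\,\Delta\log G$, and since the Carleson constant of $\Delta\log G\,(1-|z|^2)\,dA$ is of order $\ln(1/\epsilon^2)$ (this is where the range of $\log(1/G)$ enters), Wolff's lemma returns a solution bound of order at best $\epsilon^{-4}\ln(1/\epsilon^2)$ --- a full factor $\epsilon^{-2}$ worse than the bound $\frac{C_0}{\epsilon^2}\ln\frac{1}{\epsilon^2}$ you are supposed to prove. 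The plain H\"ormander--Wolff scheme with $\psi_j=\bar\varphi_j/G$ simply does not produce that constant; obtaining it is the content of Uchiyama-type refinements (replacing $1/G$ by a suitably chosen nonlinear function of $G$ in the definition of the smooth solution, with correspondingly modified $\bar\partial$-data), which is essentially how the vector-valued estimate of Trent \cite{TT 02} is reached. The purely structural parts of your outline (the antisymmetric correction $b_{jk}$, the $\ell^2$ bookkeeping, the final Cauchy--Schwarz step) are fine, so what you have sketched would yield an infinite corona theorem, but with a strictly weaker estimate than the one claimed in the statement.
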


Note that the pointwise hypothesis $\sum_{j=1} ^\infty |\varphi_j(z)|^2 \leq 1$ implies that the operator $T_\Phi$ defined on $H^2(\D)$ in analogy to that of $M_\Phi$ is bounded and $\|T_\Phi\| = \sup_{z\in \D} (\sum_{j=1} ^\infty |\varphi_j(z)|^2)^{\frac{1}{2}}$. Note that since $M(D(\mu_k)) = D(\mu_k) \cap H^\infty(\D)$, the pointwise upper bound hypothesis will not be sufficient to conclude that $M_\Phi$ is bounded from $D(\mu_k)$ to $\oplus_1^\infty D(\mu_k)$. Thus, we will replace the assumption $\sum_{j=1} ^\infty |\varphi_j(z)|^2 \leq 1$ for $z \in \D$ by the condition $\|M_\Phi\| \leq 1$. Then we have the following theorem:

\begin{theorem}\label{d1infin}
Let $\{\varphi_j\}_{j=1}^\infty \subseteq M(D(\mu_k))$. Suppose that
\begin{align*}
\|M_\Phi\| \leq 1 \quad \text{and} \quad 0 < \epsilon^2 \leq \sum_{j=1} ^\infty |\varphi_j(z)|^2 \quad \text{for all}~ z \in \D.
\end{align*}
Then there exists $\{b_j\}_{j=1}^\infty \subseteq M(D(\mu_k))$ such that
\begin{itemize}
\item[{\rm (i)}] $\Phi(z) B(z)^\top = 1$ for all $z \in \D$,
and
\item[{\rm (ii)}] $\|M_B\| \leq \frac{1}{\varepsilon} \Big(2 +16\|M_{B_{k-1}}\|^2 \Big)^{1/2}$, where $B_{k-1}$ is the solution for the corona theorem in $M(D(\mu_{k-1}))$.
\end{itemize}
\end{theorem}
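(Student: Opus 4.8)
The plan is to argue by induction on the number $k$ of point masses. The base case $k=0$ is $M(D(\mu_0))=H^\infty(\D)$ (since $\mu_0=0$ gives $D(\mu_0)=H^2(\D)$), where the result is exactly the infinite corona theorem of Carleson--Rosenblum--Tolokonnikov (Theorem \ref{carlcorona}), because for the column operator over $H^2(\D)$ one has $\|M_B\|^2=\sup_{z\in\D}\sum_j|b_j(z)|^2$. For the inductive step I would exploit the basic decomposition
\[
\|f\|_{D(\mu_k)}^2 = \|f\|_{D(\mu_{k-1})}^2 + a_k\, D_{\zeta_k}(f),
\]
together with the Leibniz-type identity for the local Dirichlet integral,
\[
\frac{gh-(gh)(\zeta_k)}{z-\zeta_k} = g\cdot\frac{h-h(\zeta_k)}{z-\zeta_k} + h(\zeta_k)\cdot\frac{g-g(\zeta_k)}{z-\zeta_k},
\]
valid in $H^2(\D)$. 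These relate a solution in $M(D(\mu_{k-1}))$ to one in $M(D(\mu_k))$, the only new feature at level $k$ being that one must control the behavior at the additional point $\zeta_k$.

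Since the $\varphi_j\in M(D(\mu_k))$ have nontangential limits at $\zeta_k$ with $\sum_j|\varphi_j(\zeta_k)|^2\ge\varepsilon^2$, set $c_j=\overline{\varphi_j(\zeta_k)}/\sum_i|\varphi_i(\zeta_k)|^2$, so that $\sum_j\varphi_j(\zeta_k)c_j=1$ and $\sum_j|c_j|^2\le\varepsilon^{-2}$. The function $\Lambda=\sum_j c_j\varphi_j$ is then a multiplier of $D(\mu_k)$ with $\|M_\Lambda\|\le\varepsilon^{-1}\|M_\Phi\|\le\varepsilon^{-1}$ (by Cauchy--Schwarz against $(c_j)$ and $\|M_\Phi\|\le1$) and $\Lambda(\zeta_k)=1$. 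Invoking the inductive hypothesis at level $k-1$ gives $B_{k-1}=(\beta_j)_j$ with $\sum_j\varphi_j\beta_j=1$, and I would define
\[
b_j = c_j + (1-\Lambda)\,\beta_j .
\]
The Bezout equation survives because $\sum_j\varphi_j b_j=\Lambda+(1-\Lambda)\sum_j\varphi_j\beta_j=\Lambda+(1-\Lambda)=1$. The point of the correction is that $1-\Lambda$ vanishes at $\zeta_k$: writing $w=(1-\Lambda)/(z-\zeta_k)=-(\Lambda-\Lambda(\zeta_k))/(z-\zeta_k)\in H^2(\D)$ with $\|w\|^2=D_{\zeta_k}(\Lambda)$, one sees that $(1-\Lambda)\beta_j$ has value $0$ at $\zeta_k$ and a finite, controlled local Dirichlet integral there, even though $\beta_j\in M(D(\mu_{k-1}))$ carries no control at $\zeta_k$. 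This is the essential mechanism of the step.

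For the norm bound I would estimate $\|M_B f\|^2=\sum_j\|b_j f\|_{D(\mu_k)}^2$ by splitting each summand with the decomposition above into a $D(\mu_{k-1})$-piece and an $a_kD_{\zeta_k}$-piece, and within each piece separating $c_j f$ from $(1-\Lambda)\beta_j f$ via $\|x+y\|^2\le2\|x\|^2+2\|y\|^2$. The crucial bookkeeping is that the two constant-term contributions recombine, using $\sum_j|c_j|^2\le\varepsilon^{-2}$ and $\|\cdot\|_{D(\mu_{k-1})}^2+a_kD_{\zeta_k}(\cdot)=\|\cdot\|_{D(\mu_k)}^2$, into exactly $2\varepsilon^{-2}\|f\|_{D(\mu_k)}^2$, which produces the ``$2$''. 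The two $(1-\Lambda)\beta_j$-contributions recombine identically into $2\|M_{B_{k-1}}\|^2\|(1-\Lambda)f\|_{D(\mu_k)}^2$; here the $D(\mu_{k-1})$-piece uses $\sum_j\|\beta_j g\|_{D(\mu_{k-1})}^2\le\|M_{B_{k-1}}\|^2\|g\|_{D(\mu_{k-1})}^2$, while the local piece uses the pointwise bound $\sup_\T\sum_j|\beta_j|^2\le\|M_{B_{k-1}}\|^2$ (a reproducing-kernel estimate for the column operator $M_{B_{k-1}}$) together with $\sum_j\|\beta_j f w\|_{H^2}^2=\sum_j\|\beta_j\cdot\frac{(1-\Lambda)f}{z-\zeta_k}\|^2\le\|M_{B_{k-1}}\|^2 D_{\zeta_k}((1-\Lambda)f)$. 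Bounding $\|(1-\Lambda)f\|_{D(\mu_k)}\le\|M_{1-\Lambda}\|\,\|f\|_{D(\mu_k)}$ with $\|M_{1-\Lambda}\|\le 2\varepsilon^{-1}$ (since $\|M_\Lambda\|\le\varepsilon^{-1}$ and $\varepsilon\le\|M_\Phi\|\le1$) then controls the $B_{k-1}$-contribution by a universal multiple of $\varepsilon^{-2}\|M_{B_{k-1}}\|^2\|f\|^2$, which the careful accounting records as $16\,\varepsilon^{-2}\|M_{B_{k-1}}\|^2\|f\|^2$, giving $\|M_B\|^2\le\varepsilon^{-2}(2+16\|M_{B_{k-1}}\|^2)$.

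The main obstacle is precisely control of the new point $\zeta_k$: the inductive solution $B_{k-1}$ carries no information about the local Dirichlet integral at $\zeta_k$, so the naive choice $b_j=\beta_j$ fails because $\sum_jD_{\zeta_k}(\beta_j f)$ need not be finite. The multiplicative correction by $1-\Lambda$, which vanishes at $\zeta_k$ and is itself a multiplier of norm $O(\varepsilon^{-1})$, repairs this while leaving the Bezout identity intact; verifying that each $b_j$ is genuinely a multiplier of $D(\mu_k)$ (not merely that the symbols satisfy the Bezout equation) and that the recombination of norms is exact is where the care lies. A secondary technical point I would address is that applying the inductive hypothesis requires the normalization $\|M_\Phi\|\le1$ to descend to $D(\mu_{k-1})$; this is handled by a comparison of multiplier norms, or if necessary by rescaling $\Phi$ and tracking the effect on $\varepsilon$, after which the recursion closes and, together with the base case, proves the theorem.
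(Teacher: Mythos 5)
Your proposal is correct and is essentially the paper's own proof: your corrected solution $b_j = c_j + (1-\Lambda)\beta_j$, with $c_j = \overline{\varphi_j(\zeta_k)}/|\Phi(\zeta_k)|^2$ and $\Lambda = \sum_i c_i\varphi_i$, is literally the same function the paper writes as $b_j(z) = \frac{\overline{\varphi_j(\zeta_{l+1})}}{|\Phi(\zeta_{l+1})|^2} - \frac{\sum_{i}[\varphi_i(z)-\varphi_i(\zeta_{l+1})]\,\overline{\varphi_i(\zeta_{l+1})}}{|\Phi(\zeta_{l+1})|^2}\,e_j(z)$, and your norm estimate performs the same split (constant term controlled by $\sum_j|c_j|^2\le\varepsilon^{-2}$; correction term controlled by the inductive multiplier bound on the $D(\mu_{k-1})$-part and by $\sup_{z}\sum_j|\beta_j(z)|^2\le\|M_{B_{k-1}}\|^2$ on the local part at $\zeta_k$), arriving at the same kind of bound. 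The only differences are presentational or in your favor: you verify the Bezout identity directly from $\sum_j\varphi_jb_j=\Lambda+(1-\Lambda)$, which makes the paper's appeal to Trent's Koszul-matrix lemma (Lemma \ref{koszmats}) unnecessary, and you explicitly flag and repair, via rescaling, the fact that $\|M_\Phi\|\le 1$ as a multiplier of $D(\mu_k)$ does not formally yield $\|M_\Phi\|\le 1$ as a multiplier of $D(\mu_{k-1})$ --- a point the paper glosses over when invoking its inductive hypothesis.
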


We will use induction to prove Theorem \ref{corothmuk} and Theorem \ref{d1infin}. In section 4, we show that the Bass stable rank of $M(D(\mu_k))$ is one. Throughout this paper, we use $C, C_1, C_2, \ldots$ for absolute constants.

\section{Corona theorem for $M(D(\mu_k))$}

\subsection{}
First, we consider that $k =1$ and $\mu_k = \delta_1$, the unit point mass at $1$. To prove the corona theorem for $M(D(\delta_1))$, we need the following two Lemmas (see \cite{RS 91}).

\begin{lemma}\label{ddel1fuc}
Let $f \in D(\delta_1)$. Then
\begin{itemize}
\item[{\rm (i)}] $f = f(1) + (z-1)g$ for some $g \in H^2(\D)$ and $D_1(f) = \|g\|_{H^(\D)}^2$.
\item[{\rm (ii)}] $\lim_{r\rightarrow 1^-} f(r) = f(1)$.
\item[{\rm (iii)}] $|f(1)| \leq C \|f\|_{D(\delta_1)} ~($see \cite{S 98}$)$.
\end{itemize}

\end{lemma}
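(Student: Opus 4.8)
The plan is to reduce all three parts to a single Taylor-coefficient computation and then read off the analytic consequences. Write $f(z) = \sum_{k=0}^\infty a_k z^k \in H^2(\D)$ and set, for each $n \ge 0$, the tail $T_n = \sum_{k>n} a_k$. First I would record the Richter--Sundberg expansion of the local Dirichlet integral at $\zeta = 1$: starting from the identity $D_1(f) = \int_\D |f'(z)|^2 \frac{1-|z|^2}{|1-z|^2}\,dA(z)$ stated above and using $\frac{z^k-1}{z-1} = \sum_{j=0}^{k-1} z^j$, a direct computation gives $D_1(f) = \sum_{n=0}^\infty |T_n|^2$. To make this rigorous and to guarantee that each $T_n$ converges, I would dilate: for $\rho < 1$ the function $f_\rho(z) = f(\rho z)$ is analytic past $\overline{\D}$, so $g_\rho := (f_\rho - f_\rho(1))/(z-1) \in H^2(\D)$ with $\|g_\rho\|_{H^2(\D)}^2 = D_1(f_\rho)$, and I would let $\rho \to 1$ using monotone convergence on the area integral together with a weak-limit extraction in $H^2(\D)$.

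For part (i), the finiteness $D_1(f) < \infty$ forces $\sum_n |T_n|^2 < \infty$; in particular each tail $T_n$ converges and $(T_n)$ is square-summable. I then define $g(z) = \sum_{n=0}^\infty T_n z^n \in H^2(\D)$ and $f(1) = a_0 + T_0 = \sum_{k=0}^\infty a_k$, now a convergent series. A term-by-term check of $(z-1)g$ recovers the coefficient $a_m$ of $f$ for every $m$ (the constant term is $f(1) - T_0 = a_0$, and for $m \ge 1$ one gets $T_{m-1} - T_m = a_m$), so $f = f(1) + (z-1)g$; by construction $\|g\|_{H^2(\D)}^2 = \sum_n |T_n|^2 = D_1(f)$. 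This is the whole of (i).

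Parts (ii) and (iii) are corollaries of the factorization. For (ii), the standard growth bound $|g(z)| \le \|g\|_{H^2(\D)}/\sqrt{1-|z|^2}$ gives, along the radius $z = r$, the estimate $|(r-1)g(r)| \le \sqrt{(1-r)/(1+r)}\,\|g\|_{H^2(\D)} \to 0$, whence $f(r) = f(1) + (r-1)g(r) \to f(1)$ as $r \to 1^-$. For (iii), the coefficient bounds $|f(0)| = |a_0| \le \|f\|_{H^2(\D)}$ and $|g(0)| = |T_0| \le \|g\|_{H^2(\D)} = D_1(f)^{1/2}$, combined with $f(1) = a_0 + T_0$, yield $|f(1)| \le \|f\|_{H^2(\D)} + D_1(f)^{1/2} \le 2\|f\|_{D(\delta_1)}$, so $C = 2$ works.

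The main obstacle is the one delicate point hidden in part (i): passing from finiteness of the \emph{area} integral $\int_\D |f'(z)|^2 \frac{1-|z|^2}{|1-z|^2}\,dA(z)$ to the convergence of the series defining $f(1)$ and the membership $g \in H^2(\D)$. The dilation argument, with the monotone/weak limit as $\rho \to 1$, is what justifies the coefficient identity $D_1(f) = \sum_n |T_n|^2$ and the existence of the value $f(1)$; once that is in place everything else is bookkeeping. This is exactly the content established in \cite{RS 91}, while the sharp form of the point-evaluation bound in (iii) is the point of \cite{S 98}.
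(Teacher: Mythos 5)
The paper offers no proof of this lemma at all: parts (i)--(ii) are quoted from \cite{RS 91} and part (iii) from \cite{S 98}, so what you are attempting is a self-contained derivation of a cited theorem. Your deductions of (ii) and (iii) from (i) are correct and clean: the growth bound $|g(r)| \le \|g\|_{H^2(\D)}(1-r^2)^{-1/2}$ gives (ii), and $f(1) = f(0) + g(0)$ gives (iii) with the explicit constant $\sqrt{2}$. The problem is your argument for (i), which carries the entire analytic content of the lemma (it \emph{is} the Richter--Sundberg theorem), and there the proposal has a genuine gap.

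The hypothesis $f \in D(\delta_1)$ means finiteness of the area integral $\int_\D |f'(z)|^2 \tfrac{1-|z|^2}{|1-z|^2}\,dA(z)$, whereas the quantity $\|\tfrac{f-f(1)}{z-1}\|_{H^2(\D)}^2$ only makes sense once one knows that $f(1)$ exists and that the difference quotient lies in $H^2(\D)$ --- which is precisely assertion (i). Your ``direct computation'' with $\tfrac{z^k-1}{z-1} = \sum_{j=0}^{k-1} z^j$ establishes only the conditional statement: \emph{if} $\sum_k a_k$ converges and $(T_n) \in \ell^2$, then $\tfrac{f-f(1)}{z-1} = \sum_n T_n z^n$ and its $H^2$ norm is $\sum_n |T_n|^2$. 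It never touches the area integral, so by itself it assumes what is to be proven. The dilation argument is meant to be the bridge, but its key step fails as stated: ``monotone convergence on the area integral'' is not available. Changing variables $w = \rho z$ gives
\begin{equation*}
D_1(f_\rho) = \int_{\rho\D} |f'(w)|^2 \frac{\rho^2 - |w|^2}{|\rho - w|^2}\, dA(w),
\end{equation*}
whose weight blows up at the \emph{interior} point $w = \rho$, where the weight defining $D_1(f)$ is bounded; hence there is neither pointwise monotonicity in $\rho$ nor a dominating integrand, and Fatou's lemma yields only $D_1(f) \le \liminf_{\rho\to 1} D_1(f_\rho)$ --- the wrong direction. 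What your weak-limit extraction actually requires is the upper bound $\sup_{\rho<1} D_1(f_\rho) \le C\, D_1(f)$, equivalently $\sup_\rho \|g_\rho\|_{H^2(\D)} < \infty$, and such uniform dilation estimates for local Dirichlet integrals are nontrivial facts that are normally \emph{deduced from} the Richter--Sundberg formula rather than used to prove it. If you want to complete your coefficient route without simply citing \cite{RS 91}, drop the dilations: a Tonelli computation, valid for every $f$ holomorphic on $\D$, gives $\int_\D |f'|^2 \tfrac{1-|z|^2}{|1-z|^2}\tfrac{dA}{\pi} = \sum_{p\ge 0} \tfrac{|s_p|^2}{(p+1)(p+2)}$ with $s_p = \sum_{j=1}^{p+1} j a_j$; the real missing work is then the discrete, summation-by-parts argument showing that finiteness of this sum forces the tails $T_n$ to converge, to be square-summable, and to satisfy $\sum_n |T_n|^2 = \sum_p \tfrac{|s_p|^2}{(p+1)(p+2)}$. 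That identity, not bookkeeping, is the heart of part (i), and it is absent from the proposal.
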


\begin{lemma}\label{mulalgduk}
Let $\varphi \in H^\infty(\D)$ and $f \in D(\delta_\zeta)$. Then $\varphi f \in D(\delta_\zeta)$ if and only if $f(\zeta) = 0$ or
$\varphi \in D(\delta_\zeta)$. Furthermore,
$$ D_\zeta(\varphi f) \leq 2 (|| \varphi ||_\infty ^2 D_\zeta(f) + |f(\zeta)|^2 D_\zeta(\varphi))$$
and
$$ |f(\zeta)|^2 D_\zeta(\varphi) \leq 2 (|| \varphi ||_\infty ^2 D_\zeta(f) + D_\zeta(\varphi f)). $$
If $f(\zeta) = 0$ then one even has $D_\zeta(\varphi f) \leq || \varphi ||_\infty ^2 D_\zeta(f)$, while the second inequality can be replaced with the trivial observation that the right-hand side is nonnegative.
\end{lemma}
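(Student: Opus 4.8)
The plan is to reduce everything to the single factorization of Lemma \ref{ddel1fuc}, applied at a general $\zeta \in \T$ rather than at $1$. Writing $g = \frac{f - f(\zeta)}{z-\zeta}$, we have $f = f(\zeta) + (z-\zeta)g$ with $g \in H^2(\D)$ and $D_\zeta(f) = \|g\|_{H^2(\D)}^2$. Everything flows from the algebraic identity, valid whenever the radial limit $\varphi(\zeta)$ exists,
\begin{equation*}
\frac{\varphi f - \varphi(\zeta) f(\zeta)}{z-\zeta} = \varphi\, g + f(\zeta)\,\frac{\varphi - \varphi(\zeta)}{z-\zeta},
\end{equation*}
which one checks by adding and subtracting $\varphi f(\zeta)$ in the numerator. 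Since $\varphi \in H^\infty(\D)$ and $g \in H^2(\D)$, the first term $\varphi g$ always lies in $H^2(\D)$ with $\|\varphi g\|_{H^2(\D)} \le \|\varphi\|_\infty \|g\|_{H^2(\D)}$, while the second term lies in $H^2(\D)$ precisely when $\varphi \in D(\delta_\zeta)$, in which case its squared norm is $|f(\zeta)|^2 D_\zeta(\varphi)$.

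First I would dispose of the two ``if'' cases. If $f(\zeta)=0$, then $\varphi f = (z-\zeta)\varphi g$, so $\frac{\varphi f}{z-\zeta} = \varphi g \in H^2(\D)$; hence $\varphi f \in D(\delta_\zeta)$ with $(\varphi f)(\zeta)=0$ and $D_\zeta(\varphi f) = \|\varphi g\|_{H^2(\D)}^2 \le \|\varphi\|_\infty^2 D_\zeta(f)$, which is exactly the improved inequality asserted at the end. If instead $\varphi \in D(\delta_\zeta)$, then $\varphi(\zeta)$ exists and both terms on the right of the identity lie in $H^2(\D)$, so $\varphi f \in D(\delta_\zeta)$, the left-hand side realizes $\frac{\varphi f - (\varphi f)(\zeta)}{z-\zeta}$, and $(\varphi f)(\zeta) = \varphi(\zeta)f(\zeta)$.

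The two quantitative estimates then follow from the identity by the elementary bound $(a+b)^2 \le 2a^2 + 2b^2$. For the first, I bound $D_\zeta(\varphi f) = \big\|\varphi g + f(\zeta)\frac{\varphi-\varphi(\zeta)}{z-\zeta}\big\|_{H^2(\D)}^2$ directly. For the second, I rearrange the identity to isolate $f(\zeta)\frac{\varphi-\varphi(\zeta)}{z-\zeta} = \frac{\varphi f - (\varphi f)(\zeta)}{z-\zeta} - \varphi g$ and estimate the squared norm of the right side, recognizing the left side as $|f(\zeta)|^2 D_\zeta(\varphi)$.

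It remains to prove the ``only if'' direction, and this is the step that needs the most care, since an arbitrary $\varphi \in H^\infty(\D)$ need not have a radial limit at $\zeta$, so $\varphi(\zeta)$ is not available a priori. I would handle it by a purely algebraic comparison of the two factorizations, avoiding the identity. Assuming $\varphi f \in D(\delta_\zeta)$, write $\varphi f = (\varphi f)(\zeta) + (z-\zeta)h$ with $h \in H^2(\D)$; comparing with $\varphi f = \varphi f(\zeta) + (z-\zeta)\varphi g$ yields $f(\zeta)\varphi = (\varphi f)(\zeta) + (z-\zeta)(h - \varphi g)$. If $f(\zeta)\neq 0$, dividing by $f(\zeta)$ exhibits $\varphi$ as a constant plus $(z-\zeta)$ times the $H^2(\D)$ function $\frac{h-\varphi g}{f(\zeta)}$; hence $\varphi \in D(\delta_\zeta)$, with the constant necessarily equal to $\varphi(\zeta)$. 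This establishes the equivalence and completes the proof.
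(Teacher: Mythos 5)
Your proof is correct, but note that the paper itself does not prove this lemma at all --- it is quoted directly from Richter--Sundberg \cite{RS 91}, so your argument supplies a proof the paper omits. Your route is the natural one and, in essence, the one underlying \cite{RS 91}: everything reduces to the factorization characterization of $D(\delta_\zeta)$ together with the algebraic identity $\varphi f - \varphi(\zeta)f(\zeta) = \varphi\,(f-f(\zeta)) + f(\zeta)\,(\varphi - \varphi(\zeta))$, and your treatment of the ``only if'' direction --- comparing the two factorizations of $\varphi f$ and dividing by $f(\zeta)$ to exhibit $\varphi$ as a constant plus $(z-\zeta)$ times an $H^2(\D)$ function, thereby avoiding any a priori appeal to the existence of $\varphi(\zeta)$ --- is exactly the right way to handle the genuine subtlety there. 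One dependency you should make explicit: you repeatedly use the \emph{converse} of the factorization statement, namely that if $u = a + (z-\zeta)v$ with $a$ constant and $v \in H^2(\D)$, then $u \in D(\delta_\zeta)$, $a$ is the nontangential limit $u(\zeta)$, and $D_\zeta(u) = \|v\|_{H^2(\D)}^2$ (together with uniqueness of such a representation, which follows since $\frac{1}{z-\zeta} \notin H^2(\D)$). The paper's Lemma \ref{ddel1fuc}(i) as stated gives only the forward direction and only at $\zeta = 1$; the full equivalence is indeed in \cite{RS 91}, so this is a citation-level gap rather than a mathematical one, but without it your ``if'' direction and your ``only if'' conclusion both rest on an unstated fact. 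A second, minor point of bookkeeping: the two displayed inequalities should be read as holding in all cases (with the convention $D_\zeta(\varphi) = \infty$ when $\varphi \notin D(\delta_\zeta)$), and your derivations cover precisely the nontrivial cases --- when $f(\zeta) \neq 0$ and both sides are finite --- with the remaining cases either trivial or handled by your $f(\zeta)=0$ computation; spelling out that case split would make the write-up airtight.
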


Thus, by Lemma \ref{mulalgduk}, we have $M(D(\mu_k))= D(\mu_k) \cap H^\infty(\D)$, where $\mu_k = \sum_{i=1}^k a_i \delta_{\zeta_i}$. The norm in $D(\mu_k) \cap H^\infty(\D)$ is defined by
\begin{align*}
||f||_{D(\mu_k) \cap H^\infty(\D)} = ||f||_{D(\mu_k)}+||f||_\infty, \quad f \in D(\mu_k) \cap H^\infty(\D).
\end{align*}

We will use a similar idea as in Lemma 2.1 of \cite{MSW 10} to prove the corona theorem for $M(D(\delta_1))$.

For ease of notation, we let $K: = M(D(\delta_1)) = D(\delta_1)\cap H^\infty(\D)$, and $K_0: = \{f \in K, f(1) = 0\}$. Note that $K_0 \subset K$, and $K_0$ is a Banach algebra without identity.

Note that evaluation at $z \in \D \cup \{1\}$ is a multiplicative linear functional on $K_0$ (if $z =1$, then it is a trivial one). We have the following lemma.

\begin{lemma}\label{mullfk0}
The set of multiplicative linear functionals consisting of evaluations at points of $\D$ is dense in the set of all multiplicative linear functionals on $K_0$.
\end{lemma}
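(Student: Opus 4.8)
The plan is to show that every nonzero multiplicative linear functional $\psi$ on $K_0$ lies in the weak-$*$ closure of the point evaluations $\{\mathrm{eval}_z : z \in \D\}$. Since a character on a Banach algebra is automatically contractive, $\psi \in K_0^*$, and a failure of density would produce, directly from the definition of the weak-$*$ topology (whose predual is $K_0$), finitely many $f_1,\dots,f_n \in K_0$ and an $\epsilon > 0$ with $\sum_{i=1}^n |f_i(z) - \psi(f_i)|^2 \ge \epsilon^2$ for every $z \in \D$. I would then derive a contradiction, and the crux of the argument is a device that turns a Carleson corona solution in $H^\infty(\D)$ into data living inside the non-unital algebra $K_0$.

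First I would set $g_i = f_i - \psi(f_i)$. Since each $f_i \in K_0 \subseteq H^\infty(\D)$, the $g_i$ are bounded, the separation hypothesis reads $\sum_i |g_i(z)|^2 \ge \epsilon^2 > 0$ on $\D$, and $\sum_i |g_i|^2$ is bounded above; after a harmless normalization, Carleson's corona theorem \cite{C 62} supplies $h_1, \dots, h_n \in H^\infty(\D)$ with $\sum_{i=1}^n g_i h_i \equiv 1$ on $\D$. The functions $h_i$ need not lie in $D(\delta_1)$, so they cannot be fed to $\psi$ directly; this is the main obstacle, and it is exactly the point where the multiplier structure at the mass point $1$ must be exploited.

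The resolution, which is the heart of the proof, is to multiply the Bezout equation by a single well-chosen $u \in K_0$ with $\psi(u) = 1$ (such $u$ exists because $\psi \neq 0$). Because $u(1) = 0$, Lemma \ref{mulalgduk} applied at the point $1$ gives $h_i u \in D(\delta_1)$ for each $i$, with $D_1(h_i u) \le \|h_i\|_\infty^2 D_1(u)$; being also bounded, $h_i u \in K$, and since $u(r) \to u(1) = 0$ as $r \to 1^-$ while $h_i$ is bounded, Lemma \ref{ddel1fuc}(ii) forces $(h_i u)(1) = 0$, so in fact $h_i u \in K_0$. As $K_0$ is an ideal of $K$, each product $g_i h_i u = f_i(h_i u) - \psi(f_i)\,(h_i u)$ lies in $K_0$, and summing the Bezout relation against $u$ yields the identity $\sum_{i=1}^n g_i h_i u = u$ \emph{inside} $K_0$.

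Finally I would apply $\psi$ to this identity. Using multiplicativity of $\psi$ on $K_0$ (with both factors of each product in $K_0$), $\psi(g_i h_i u) = \psi(f_i)\,\psi(h_i u) - \psi(f_i)\,\psi(h_i u) = 0$ for every $i$, so the left-hand side evaluates to $0$, while the right-hand side is $\psi(u) = 1$; this contradiction shows $\psi$ is in the weak-$*$ closure of the interior evaluations, proving the lemma. I expect the only delicate points to be the verification that the $H^\infty(\D)$ corona solutions re-enter $K_0$ after multiplication by $u$ — precisely where the vanishing condition $u(1) = 0$ together with Lemma \ref{mulalgduk} and Lemma \ref{ddel1fuc} are indispensable — and the routine translation of a missing basic weak-$*$ neighborhood of $\psi$ into a corona hypothesis for the $g_i$ in $H^\infty(\D)$.
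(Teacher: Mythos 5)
Your proof is correct, but it runs along a genuinely different track from the paper's. The paper extends the given character $m$ on $K_0$ to a character $M$ on all of $H^\infty(\D)$ by the formula $M(f) := m(fg_0)/m(g_0)$ for a fixed $g_0 \in K_0$ with $m(g_0) \neq 0$ (well-defined because $H^\infty(\D)\, K_0 \subseteq K_0$, via Lemma \ref{mulalgduk}), then invokes Carleson's theorem in its \emph{density} form to produce a net of point evaluations converging to $M$, and simply restricts that net to $K_0$. You instead argue by contradiction, converting failure of weak-$*$ density into a corona condition for $g_i = f_i - \psi(f_i)$, invoking Carleson in its \emph{Bezout-equation} form, and then transporting the $H^\infty$ solutions $h_i$ back into $K_0$ by multiplying through by a single $u \in K_0$ with $\psi(u)=1$; evaluating $\psi$ on the identity $\sum_i g_i h_i u = u$ yields $1 = 0$. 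Both arguments pivot on exactly the same structural fact — that $K_0$ is an ideal of $H^\infty(\D)$, which is where Lemma \ref{mulalgduk} and the vanishing at $1$ enter — and all your individual steps check out: the translation of non-density into the separation condition $\sum_i |f_i(z)-\psi(f_i)|^2 \geq \epsilon^2$ is the correct unwinding of a basic weak-$*$ neighborhood; $h_i u \in K_0$ follows as you say from Lemma \ref{mulalgduk} together with Lemma \ref{ddel1fuc}(ii); and automatic contractivity of characters is standard. What the paper's route buys is brevity and reusability — the same two-line extension argument is recycled verbatim in Lemma \ref{corothmsums} for general $\mu$, and it produces the approximating net explicitly rather than merely refuting its nonexistence. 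What your route buys is that it uses only the finite, classical Bezout form of the corona theorem (no Gelfand-topology machinery on $\mathfrak{M}(H^\infty)$ beyond the definition of weak-$*$ convergence) and it isolates very explicitly the role of the vanishing condition at $1$.
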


\begin{proof}
Let $m$ be a non-zero multiplicative linear functional on $K_0$, then there exists a function $g_0 \in K_0$, such that $m(g_0) \neq 0$.

If $f \in H^\infty (\D)$, define $M(f) : = \frac{m(fg_0)}{m(g_0)}$.

\textbf{Claim}: $M$ is well-defined, and $M$ is a non-zero multiplicative linear functional on $H^\infty (\D)$.

If we assume that the claim holds, then by Carleson's corona Theorem, there exists a net $(\beta_i)_{i \in I}$ of point evaluations in $\D$ that converges to $M$ in the weak* topology of the maximal ideal space of $H^\infty(\D)$. Note that $m$ is the restriction of $M$ to $K_0$:

\begin{equation*}
M(f) = \frac{m(fg_0)}{m(g_0)} = \frac{m(f) m(g_0)}{m(g_0)} = m(f), f \in K_0.
\end{equation*}
Also the restriction of $(\beta_i)_{i \in I}$ gives a net of point evaluations in $\D$ that converges to $m$ in the weak* topology on the dual space of $K_0$.

We are left to prove the claim: $f \in H^\infty(\D)$, $g_0 \in K_0$, so $f g_0 \in K$ by Lemma \ref{mulalgduk}. Also $(f g_0) (1) = 0$, so $f g_0 \in K_0$, which implies $M$ is well-defined.

Clearly $M$ is linear, when $f \in H^\infty (\D)$,
\begin{align*}
&|M(f)| = |\frac{m(fg_0)}{m(g_0)}| \leq \frac{\|f g_0\|_K}{|m(g_0)|}\\
& = \frac{\|f g_0\|_\infty + \|f g_0\|_{D(\delta_1)}}{|m(g_0)|} \leq \frac{\|f\|_\infty \|g_0\|_\infty + \|f\|_\infty \|g_0\|_{D(\delta_1)}}{|m(g_0)|}\\
& = \frac{\|g_0\|_K}{|m(g_0)|} \|f\|_\infty,
\end{align*}
so $M$ is a bounded functional on $H^\infty(\D)$.

When $f, h \in H^\infty(\D)$, $m(f h g_0) m(g_0) = m (f h g_0 g_0) = m (f g_0) m(h g_0)$, thus we get
\begin{align*}
M(f h) &= \frac{m(f h g_0)}{m(g_0)}\\
& = \frac{[m (f g_0) m(h g_0)] / m(g_0)}{m(g_0)}\\
& = M(f) M(h).
\end{align*}
Therefore the claim is proved.
\end{proof}

Now, we can prove the following Theorem.
\begin{theorem}\label{corothmddel1}
The set of multiplicative linear functionals consisting of evaluations at points of $\D \cup \{1\}$ is dense in the maximal ideal space of $K$.
\end{theorem}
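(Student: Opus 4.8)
The plan is to pass from the maximal ideal space of $K_0$ to that of $K$ by exploiting that $K_0$ is the codimension-one maximal ideal $\ker(\mathrm{ev}_1)$, where $\mathrm{ev}_1$ denotes evaluation at the point $1$. Note that $\mathrm{ev}_1$ is a genuine character of $K$: the boundary value $f(1) = \lim_{r\to 1^-}f(r)$ exists and is multiplicative by Lemma \ref{ddel1fuc}(ii), and it is bounded on $K$ by Lemma \ref{ddel1fuc}(iii). Since $K$ is unital with identity the constant function $1 \notin K_0$, we have the direct sum decomposition $K = \C 1 \oplus K_0$, in which every $f \in K$ is written as $f = f(1)\cdot 1 + (f - f(1))$ with $f - f(1) \in K_0$.

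First I would fix an arbitrary character $m$ on $K$ and distinguish two cases according to whether $m|_{K_0} = 0$. If $m|_{K_0} = 0$, then for $f = f(1)\cdot 1 + g$ with $g \in K_0$ we get $m(f) = f(1)m(1) + m(g) = f(1)$, so $m = \mathrm{ev}_1$; thus $m$ is itself evaluation at the point $1 \in \D \cup \{1\}$ and no approximation is needed.

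If instead $m|_{K_0} \neq 0$, then $m|_{K_0}$ is a nonzero multiplicative linear functional on $K_0$, so Lemma \ref{mullfk0} furnishes a net $(\beta_i)_{i \in I}$ of evaluations $\beta_i = \mathrm{ev}_{z_i}$ at points $z_i \in \D$ with $\beta_i \to m|_{K_0}$ in the weak$^*$ topology of $K_0^*$. It then remains to upgrade this to weak$^*$ convergence in $K^*$: for an arbitrary $f \in K$ I would use $f - f(1) \in K_0$ to compute $\beta_i(f) = f(1) + \beta_i\big(f - f(1)\big) \to f(1) + m\big(f - f(1)\big) = m(f)$, the last equality using $m(1) = 1$. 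Hence $\beta_i \to m$ in the weak$^*$ topology on all of $K$, which completes the proof.

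The main point, and the only place requiring care, is the transfer of convergence from $K_0^*$ to $K^*$. Weak$^*$ convergence of the restricted functionals only tests against functions vanishing at $1$, so the decomposition $K = \C 1 \oplus K_0$ together with the normalization $m(1) = 1$ is exactly what makes adding back the constant $f(1)$ compatible with both the evaluations $\beta_i$ and the limit $m$. The two-case analysis is otherwise routine, and the degenerate case $m|_{K_0}=0$ simply recovers the extra boundary character $\mathrm{ev}_1$ accounting for the point $1$.
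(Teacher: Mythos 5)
Your proof is correct and follows essentially the same route as the paper's: restrict the character to $K_0$, split into the cases $m|_{K_0}=0$ (giving evaluation at $1$) and $m|_{K_0}\neq 0$, invoke Lemma \ref{mullfk0}, and use the decomposition $f = f(1)\cdot 1 + (f-f(1))$ to transfer the weak$^*$ convergence of the net of point evaluations from $K_0^*$ to $K^*$. The only cosmetic difference is that you make the unital decomposition $K = \C 1 \oplus K_0$ and the normalization $m(1)=1$ explicit, which the paper leaves implicit.
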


\begin{proof}
Suppose $M$ is a non-zero multiplicative linear functional on $K$.

Let $m = M|_{K_0}$, then $m$ is a multiplicative linear functional on $K_0$. If $f \in K$, then $f - f(1) \in K_0$, so $M(f) = f(1) + m(f - f(1))$.

\underline{{\it Case} 1.} If $m = 0$, then $M(f) = f(1)$, so $M$ is the point evaluation at $1$.

\underline{{\it Case} 2.} If $m \neq 0$, the by Lemma \ref{mullfk0}, there exists a net $(\beta_i)_{i \in I}$ of point evaluations in $\D$ that converges to $m$ in the weak* topology on the dual space of $K_0$. Therefore, for all $f \in K$,
\begin{align*}
M(f) &= f(1) + m(f - f(1)) = f(1) + (\lim \limits_{i \in I} \beta_i) (f - f(1))\\
& = f(1) + \lim \limits_{i \in I} (f(\beta_i) - f(1))\\
& = \lim \limits_{i \in I} f(\beta_i) = (\lim \limits_{i \in I} \beta_i) (f).
\end{align*}
Thus $M = \lim \limits_{i \in I} \beta_i$, and this completes the proof.
\end{proof}

\begin{remark}\label{corthmd1rm}
For any $f \in K, 0 < r < 1$, let $E_r(f) = f(r)$, then from Lemma \ref{ddel1fuc} we have $f(r) \rightarrow f(1)$ as $r \rightarrow 1$. Thus $E_r \rightarrow E_1$ in the weak star topology of $K$ as $r \rightarrow 1$, which implies the set of multiplicative linear functionals consisting of evaluations at points of $\D$ is dense in the maximal ideal space of $K$.
\end{remark}

\subsection{}
In this subsection, we consider general $k \geq 1$. Let $\mu$ be a Borel measure in $\T$ with $\mu(\zeta) = 0$, where $\zeta \in \T$, and suppose that $\D$ is dense in the maximal ideal space of $M(D(\mu))$. Let $H: = M(D(\mu)) \cap D(\delta_\zeta)$  and $H_0: = \{f \in H, f(\zeta) = 0\}$. Then we have:

\begin{lemma}\label{corothmalg}
$H$ is a Banach algebra, $H_0 \subset H$ and $H_0$ is a Banach algebra without identity.
\end{lemma}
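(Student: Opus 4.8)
The plan is to treat $H$ as the intersection of two spaces sitting inside $\Hol(\D)$ and to transfer their structure. Recall from Lemma \ref{mulalgduk} that $M(D(\mu)) = D(\mu)\cap H^\infty(\D)$, so $H = D(\mu)\cap H^\infty(\D)\cap D(\delta_\zeta)$; I would equip it with the norm $\|f\|_H = \|f\|_{M(D(\mu))} + \|f\|_{D(\delta_\zeta)}$, which automatically controls $\|f\|_\infty$, $\|f\|_{D(\mu)}$ and $D_\zeta(f)^{1/2}$. The inclusion $H_0 \subset H$ is immediate from the definition, so the real content of the lemma is the completeness of $H$, its multiplicative closedness under a submultiplicative norm, and the absence of a unit in $H_0$.

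For completeness, I would observe that both $M(D(\mu))$ and $D(\delta_\zeta)$ embed continuously into $H^2(\D)$, and that $H^2(\D)$-convergence forces uniform convergence on compact subsets of $\D$, hence convergence in $\Hol(\D)$. A $\|\cdot\|_H$-Cauchy sequence is therefore Cauchy in each of $M(D(\mu))$ and $D(\delta_\zeta)$, and its two limits must coincide with its $H^2(\D)$-limit; so the common limit lies in $H$ and the convergence takes place in $\|\cdot\|_H$. Thus $H$ is complete, and $H_0$, being the kernel of the evaluation $f \mapsto f(\zeta)$ — which is bounded on $D(\delta_\zeta)$ by the analogue of Lemma \ref{ddel1fuc}(iii) — is a closed subspace, hence also complete.

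The algebra structure rests on Lemma \ref{mulalgduk}. Given $f, g \in H$, the product $fg$ lies in $M(D(\mu))$ because the latter is already an algebra, and $fg \in D(\delta_\zeta)$ because $f \in H^\infty(\D)$ and $g \in D(\delta_\zeta)$; moreover, taking $\varphi = f$ in Lemma \ref{mulalgduk} and using $|g(\zeta)| \leq \|g\|_\infty$ gives $D_\zeta(fg) \leq 2\big(\|f\|_\infty^2 D_\zeta(g) + \|g\|_\infty^2 D_\zeta(f)\big)$. Combining this with the submultiplicativity of the $M(D(\mu))$ norm yields $\|fg\|_H \leq C\|f\|_H\|g\|_H$ for an absolute constant $C$, so after passing to the equivalent submultiplicative norm $C\|\cdot\|_H$ we conclude that $H$ is a Banach algebra, with unit the constant function $1$. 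Restricting to $H_0$, the identity $(fg)(\zeta) = f(\zeta)g(\zeta)$ shows $H_0$ is closed under multiplication, so $H_0$ is a closed subalgebra and hence a Banach algebra.

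Finally, to see that $H_0$ has no identity I would test against the function $z - \zeta$, which lies in $H_0$ (it is a bounded polynomial, belongs to $D(\mu)\cap D(\delta_\zeta)$, and vanishes at $\zeta$) and is zero-free on $\D$. If some $e \in H_0$ satisfied $ef = f$ for all $f \in H_0$, then $e(z)(z-\zeta) = z-\zeta$ for every $z \in \D$ would force $e \equiv 1$, contradicting $e(\zeta) = 0$. The one genuine point requiring care is the norm estimate of the third paragraph: it is exactly where Lemma \ref{mulalgduk} is needed, and where one must verify that $\|\cdot\|_\infty$ dominates the boundary value $|g(\zeta)|$ so that the estimate closes up into submultiplicativity.
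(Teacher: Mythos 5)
Your proof is correct, and it both reorganizes and extends what the paper actually does. The paper's proof is a single paragraph: it declares that only the algebra property needs checking, and it establishes $fg \in D(\delta_\zeta)$ by the explicit factorization $fg = (z-\zeta)\bigl(\frac{f-f(\zeta)}{z-\zeta}\,g\bigr) + f(\zeta)g$, where $\frac{f-f(\zeta)}{z-\zeta} \in H^2(\D)$ because $f \in D(\delta_\zeta)$ and $g \in H^\infty(\D)$, so the first summand lies in $D(\delta_\zeta)$ by Lemma \ref{ddel1fuc}(i) and the second is a scalar multiple of $g$. You obtain the same membership by citing Lemma \ref{mulalgduk} with $\varphi = f$; this is essentially the same mathematics (the paper's factorization is how that direction of Lemma \ref{mulalgduk} is proved), but your route comes with the quantitative bound $D_\zeta(fg) \leq 2\bigl(\|f\|_\infty^2 D_\zeta(g) + \|g\|_\infty^2 D_\zeta(f)\bigr)$, which you convert into submultiplicativity of a rescaled norm on $H$ --- something the paper never addresses. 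You also verify completeness of $H$ and $H_0$, and the absence of a unit in $H_0$ via the zero-free element $z-\zeta$; the paper asserts all of this without proof, so your write-up is the more complete one. Two small points to tighten. First, the clause ``$fg \in D(\delta_\zeta)$ because $f \in H^\infty(\D)$ and $g \in D(\delta_\zeta)$'' is not sufficient on its own: Lemma \ref{mulalgduk} requires in addition that $g(\zeta)=0$ or $f \in D(\delta_\zeta)$, and it is the latter (valid since $f \in H$) that your estimate is implicitly using --- say so explicitly. Second, $z-\zeta \in H_0$ needs $z \in D(\mu)$; this is fine because $\mu$ is a finite measure (the local Dirichlet integral of the coordinate function at every point of $\T$ equals $1$, so its $D(\mu)$-integral is $\mu(\T)$), but that one-line justification should appear.
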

\begin{proof}
We only need to verify that $H$ is an algebra. Suppose $f, g \in H = M(D(\mu)) \cap D(\delta_\zeta)$, then $fg \in M(D(\mu))$. Also $f - f(\zeta) \in H$ implies $\frac{f - f(\zeta)}{z - \zeta} g \in H^2(\D)$, thus
\begin{align*}
fg = (z - \zeta) \Big(\frac{f - f(\zeta)}{z - \zeta} g\Big) + f(\zeta) g \in D(\delta_\zeta),
\end{align*}
and so $fg \in H$.
\end{proof}


\begin{lemma}\label{corothmsums}
The set of multiplicative linear functionals consisting of evaluations at points of $\D$ is dense in the maximal ideal space of $H_0$.
\end{lemma}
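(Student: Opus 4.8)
The plan is to follow the proof of Lemma \ref{mullfk0} almost verbatim, with $M(D(\mu))$ playing the role that $H^\infty(\D)$ played there and the inductive hypothesis (that $\D$ is dense in the maximal ideal space of $M(D(\mu))$) replacing Carleson's corona theorem. First I would take a non-zero multiplicative linear functional $m$ on $H_0$ and fix $g_0 \in H_0$ with $m(g_0) \neq 0$. For $f \in M(D(\mu))$ I would then define $M(f) := m(fg_0)/m(g_0)$ and show that $M$ is a well-defined, bounded, non-zero multiplicative linear functional on $M(D(\mu))$ whose restriction to $H_0$ is $m$.

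The place where the structure of $H_0$ enters is well-definedness: I need $fg_0 \in H_0$ for every $f \in M(D(\mu))$. Since $M(D(\mu))$ is an algebra, $fg_0 \in M(D(\mu))$; and because $g_0(\zeta)=0$ one may write $g_0 = (z-\zeta)h$ with $h \in H^2(\D)$ (the analog of Lemma \ref{ddel1fuc}(i) for $D(\delta_\zeta)$), so $fg_0 = (z-\zeta)(fh)$ with $fh \in H^2(\D)$ as $f$ is bounded; hence $fg_0 \in D(\delta_\zeta)$ and $(fg_0)(\zeta)=0$, giving $fg_0 \in H_0$. For boundedness I would use $|M(f)| \le \|fg_0\|_H/|m(g_0)|$ together with an estimate of the shape $\|fg_0\|_H \le C\|f\|_{M(D(\mu))}\|g_0\|_H$, where the $M(D(\mu))$-part comes from submultiplicativity of the multiplier norm and the $D(\delta_\zeta)$-part is controlled by $D_\zeta(fg_0) \le \|f\|_\infty^2 D_\zeta(g_0)$ from Lemma \ref{mulalgduk} (valid since $g_0(\zeta)=0$), using $\|f\|_\infty \le \|f\|_{M(D(\mu))}$. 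Multiplicativity is the same algebraic identity as in Lemma \ref{mullfk0}: for $f_1,f_2 \in M(D(\mu))$ all of $f_1g_0$, $f_2g_0$, $f_1f_2g_0$ lie in $H_0$, so $m(f_1f_2g_0)\,m(g_0) = m\big((f_1f_2g_0)g_0\big) = m(f_1g_0)\,m(f_2g_0)$, whence $M(f_1f_2)=M(f_1)M(f_2)$; and $M(1)=1$ shows $M \neq 0$.

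With $M$ constructed, I would invoke the inductive hypothesis to obtain a net $(\beta_i)_{i\in I}$ of point evaluations in $\D$ converging weak* to $M$ in the maximal ideal space of $M(D(\mu))$. Since $m(fg_0)=m(f)m(g_0)$ for $f \in H_0$, the restriction of $M$ to $H_0$ equals $m$, so the restricted net of point evaluations converges weak* to $m$ on $H_0$, which is exactly the claim. I expect the only real obstacle to be the bookkeeping in the well-definedness and boundedness steps, namely verifying that each product remains inside $H_0$ and that the two components of the $H$-norm are simultaneously controlled; once these are in place the argument is formally identical to that of Lemma \ref{mullfk0}.
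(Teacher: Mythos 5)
Your proposal is correct and is essentially the paper's own proof: the paper defines $M(f) := m(fg_0)/m(g_0)$ on $M(D(\mu))$, defers the verification of the claim to the argument of Lemma \ref{mullfk0}, and then restricts a net of point evaluations converging to $M$ (obtained from the hypothesis on $M(D(\mu))$) back to $H_0$, exactly as you do. Your spelled-out details (well-definedness via $g_0=(z-\zeta)h$ and Lemma \ref{mulalgduk}, boundedness via $\|f\|_\infty \le \|f\|_{M(D(\mu))}$ and submultiplicativity, and the algebraic identity for multiplicativity) are precisely the adaptations the paper leaves implicit.
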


\begin{proof}
Let $m$ be a non-zero multiplicative linear functional on $H_0$, then there exists a function $g_0 \in H_0$, such that $m(g_0) \neq 0$.

If $f \in M(D(\mu))$, define $M(f) : = \frac{m(fg_0)}{m(g_0)}$.

\textbf{Claim}: $M$ is well-defined, and $M$ is a non-zero multiplicative linear functional on $M(D(\mu))$.

The proof of the claim is similar to the argument in Lemma \ref{mullfk0}. Then there exists a net $(\beta_i)_{i \in I}$ of point evaluations in $\D$ that converges to $M$ in the Gelfand topology of the maximal ideal space of $M(D(\mu))$. Note that $m$ is the restriction of $M$ to $H_0$. Also the restriction of $(\beta_i)_{i \in I}$ gives a net of point evaluations in $\D$ that converges to $m$ in the weak* topology on the dual of $H_0$.

\end{proof}

By the same argument as in Theorem \ref{corothmddel1} we have the following Proposition:
\begin{prop}\label{corothmdu2}
The set of multiplicative linear functionals consisting of evaluations at points of $\D$ is dense in the maximal ideal space of $H$.
\end{prop}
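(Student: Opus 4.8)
The plan is to mirror the proof of Theorem \ref{corothmddel1} line for line, with the boundary point $1$ replaced by $\zeta$, the algebra $K$ replaced by $H$, and the ideal $K_0$ replaced by $H_0$. I would begin by fixing an arbitrary non-zero multiplicative linear functional $M$ on $H$ and restricting it to $H_0$, setting $m := M|_{H_0}$. Since $H_0$ is a subalgebra of $H$ by Lemma \ref{corothmalg}, $m$ is a (possibly zero) multiplicative linear functional on $H_0$. For any $f \in H$ the function $f - f(\zeta)$ vanishes at $\zeta$ and hence lies in $H_0$; using that $M$ is unital and linear, this gives the basic decomposition $M(f) = f(\zeta) + m(f - f(\zeta))$, which is the engine of the whole argument.

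The proof then splits into the same two cases as in Theorem \ref{corothmddel1}. If $m = 0$, the decomposition collapses to $M(f) = f(\zeta)$, so $M$ is evaluation at the boundary point $\zeta$. If $m \neq 0$, then Lemma \ref{corothmsums} furnishes a net $(\beta_i)_{i \in I}$ of point evaluations in $\D$ converging to $m$ in the weak* topology on the dual of $H_0$; substituting this into the decomposition and using $\beta_i(f - f(\zeta)) = f(\beta_i) - f(\zeta)$ yields $M(f) = \lim_i f(\beta_i)$ for every $f \in H$, that is, $M = \lim_i \beta_i$ in the Gelfand topology. In this case $M$ is directly exhibited as a weak* limit of interior evaluations.

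The one place where I cannot simply transcribe the earlier proof is Case 1, because the Proposition asserts density of evaluations at points of $\D$ alone, not of $\D \cup \{\zeta\}$; I must still realize evaluation at $\zeta$ as a weak* limit of interior evaluations. This is settled exactly as in Remark \ref{corthmd1rm}: every $f \in H \subseteq D(\delta_\zeta)$ satisfies $\lim_{r \to 1^-} f(r\zeta) = f(\zeta)$ by the rotated form of Lemma \ref{ddel1fuc}(ii), so the evaluations $E_{r\zeta}$ at the interior points $r\zeta \in \D$ converge weak* to $E_\zeta$, placing evaluation at $\zeta$ in the closure of the interior evaluations. Combining the two cases then gives the Proposition. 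I do not expect any genuine obstacle here: the only substantive input, namely the well-definedness and multiplicativity of the induced functional on $M(D(\mu))$ together with the density of $\D$ in its maximal ideal space, is already packaged in Lemma \ref{corothmsums}, and the remaining work is the bookkeeping above, requiring only care in transporting the radial-limit fact from $1$ to the general boundary point $\zeta$.
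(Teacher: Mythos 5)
Your proof is correct and takes essentially the same route as the paper, whose proof of this Proposition is simply to invoke ``the same argument as in Theorem \ref{corothmddel1}'': restrict $M$ to $H_0$, write $M(f) = f(\zeta) + m(f - f(\zeta))$, and split into the cases $m = 0$ and $m \neq 0$, using Lemma \ref{corothmsums} in the latter. You are in fact slightly more careful than the paper, since you noticed that the Proposition asserts density of evaluations at points of $\D$ alone (not $\D \cup \{\zeta\}$) and closed that gap exactly as the paper's Remark \ref{corthmd1rm} does, via the rotated radial-limit fact $f(r\zeta) \to f(\zeta)$ for $f \in D(\delta_\zeta)$, so that $E_{r\zeta} \to E_\zeta$ in the Gelfand topology.
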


Now we can prove Theorem \ref{corothmuk}.
\begin{proof}
This clearly follows from Proposition \ref{corothmdu2} and induction.
\end{proof}

\begin{remark}\label{corthmsdiri}
If we let $d\mu = \frac{dt}{2\pi}$, then $D(\frac{dt}{2\pi})$ is the Dirichlet space $D$. By Tolokonnikov \cite{T 91}, Xiao \cite{X 98} we have the corona theorem in $M(D)$, then by Proposition \ref{corothmdu2} we also have the corona theorem in $M(D) \cap D(\delta_\zeta)$ for any $\zeta \in \T$.
\end{remark}

\section{Infinite version for $M(D(\mu_k))$}
\subsection{}
First, we consider $M(D(\delta_1))$.

The following Lemma can be derived from \cite[Lemma 6]{TT 04} (see also \cite{RT 11}).
\begin{lemma}\label{koszmats}
Let $\{a_j\}_{j=1}^\infty \in l^2$ and $A = (a_1, a_2, \ldots) \in B(l^2,\C)$. Then there exists an $\infty \times \infty$ matrix $Q_A$, such that the entries of $Q_A$ belong to the set $\{0, \pm a_j: j =1,2,\ldots\}$ and $Q_A$ satisfies
\begin{itemize}
\item[{\rm (a)}] range of $Q_A \subseteq$ kernel of $A$.

\item[{\rm (b)}] $(AA^*)I - A^*A = Q_A Q_A^*$.

\item[{\rm (c)}] If $\{d_j\}_{j=1}^\infty \in l^2$ and $D = (d_1, d_2, \ldots)$, then
\begin{align*}
(AD^\top)I - D^\top A = Q_A Q_D^\top.
\end{align*}
\end{itemize}
\end{lemma}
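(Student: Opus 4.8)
The plan is to write down the classical Koszul matrix explicitly and then verify the three properties by direct computation, the only subtlety being the passage to infinitely many variables. I would index the columns of $Q_A$ by the pairs $(i,j)$ of positive integers with $i<j$ (a countable set, so $Q_A$ is genuinely $\infty\times\infty$), and in the column labelled $(i,j)$ place the entry $-a_j$ in row $i$, the entry $a_i$ in row $j$, and $0$ in every other row. By construction every entry of $Q_A$ lies in $\{0,\pm a_j : j=1,2,\ldots\}$, which is the required form; the same recipe applied to the sequence $\{d_j\}$ produces $Q_D$. Property (a) is then immediate, since applying $A$ to the column $(i,j)$ gives $a_i(-a_j)+a_j a_i=0$, so each column lies in $\ker A$ and hence $\ran Q_A\subseteq\ker A$.

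For (c) I would compute the entries of $Q_A Q_D^\top$ directly, keeping in mind that $\top$ denotes the transpose without conjugation. On the diagonal, the $(p,p)$ entry collects the contributions of all columns $(i,j)$ with $p\in\{i,j\}$, giving $\sum_{j\ne p}a_j d_j=AD^\top-a_p d_p$, which is exactly the $(p,p)$ entry of $(AD^\top)I-D^\top A$. Off the diagonal only a single column contributes — namely $(p,q)$ when $p<q$, or $(q,p)$ when $p>q$ — and in either case the product of the two nonzero entries equals $-a_q d_p=-(D^\top A)_{pq}$, matching the $(p,q)$ entry of $(AD^\top)I-D^\top A$ because the identity contributes nothing off the diagonal. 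This establishes (c) entrywise. Property (b) then follows as the special case of (c) obtained by taking $d_j=\overline{a_j}$: indeed $D^\top=A^*$, $AD^\top=AA^*$, $Q_D=\overline{Q_A}$ and $Q_D^\top=Q_A^*$, so (c) becomes $(AA^*)I-A^*A=Q_A Q_A^*$.

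The one genuine point to address, and the step I expect to be the main obstacle, is not the algebra but the passage to the infinite-dimensional setting: one must check that $Q_A$ defines a bounded operator on $l^2$ and that the formal matrix products above actually converge and represent bounded operators. To avoid circularity I would argue through finite truncations. For the finite row $A_n=(a_1,\ldots,a_n)$ the corresponding finite Koszul matrix $Q_{A_n}$ satisfies $Q_{A_n}Q_{A_n}^*=(A_nA_n^*)I_n-A_n^*A_n$ as an honest matrix identity; since $A_n^*A_n\ge 0$ we have $0\le Q_{A_n}Q_{A_n}^*\le (A_nA_n^*)I_n$, whence $\|Q_{A_n}\|=\|Q_{A_n}Q_{A_n}^*\|^{1/2}\le (A_nA_n^*)^{1/2}\le\|A\|$.

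Letting $n\to\infty$, this uniform bound shows that $Q_A$ is a bounded operator with $\|Q_A\|\le\|A\|$, and likewise $Q_D$ is bounded; the entrywise identities of (a)--(c) then pass to the limit and hold as identities of bounded operators. These convergence and boundedness details are precisely what is supplied by \cite[Lemma 6]{TT 04}, from which the present lemma is derived.
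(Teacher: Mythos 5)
Your proof is correct. There is, however, nothing in the paper to compare it against line by line: the paper does not prove this lemma, it merely remarks that it ``can be derived from'' \cite[Lemma 6]{TT 04} (see also \cite{RT 11}). What you have written is a self-contained proof of essentially the construction that this citation supplies: columns indexed by pairs $(i,j)$ with $i<j$, carrying $-a_j$ in row $i$ and $a_i$ in row $j$, form exactly the first-stage Koszul complex matrix used by Trent. Your entrywise verification is accurate --- the diagonal entries collect $\sum_{j\neq p}a_jd_j = AD^\top - a_pd_p$ and the off-diagonal $(p,q)$ entry is $-a_qd_p = -(D^\top A)_{pq}$ in both cases $p<q$ and $p>q$ --- and deducing (b) from (c) via $d_j=\overline{a_j}$, $Q_D=\overline{Q_A}$, $Q_D^\top=Q_A^*$ is clean. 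Your handling of the infinite-dimensional issue is also the right one: the entry sums converge absolutely by Cauchy--Schwarz since $\{a_j\},\{d_j\}\in l^2$, the finite truncations satisfy $Q_{A_n}Q_{A_n}^*=(A_nA_n^*)I_n-A_n^*A_n$, hence $\|Q_{A_n}\|\le \|A\|$ uniformly, and since $Q_Ax=Q_{A_n}x$ for every finitely supported $x$ and $n$ large, density gives boundedness of $Q_A$ (and likewise $Q_D$), after which the entrywise identities hold as identities of bounded operators. Compared with the paper's bare citation, your argument buys two things: the reader need not check that Trent's Lemma 6, stated there for specific corona data, really yields the statement in this form; and you record the norm bound $\|Q_A\|\le\|A\|$, which the lemma does not assert but which is implicitly required for (a)--(c) to make sense as operator statements.
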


We need one lemma before we prove the corona theorem for infinitely many functions in $M(D(\delta_1))$.
\begin{lemma}\label{conat1no0}
Let $\{\varphi_j\}_{j=1}^\infty \subseteq M(D(\delta_1))$. Then
\begin{itemize}
\item[{\rm (i)}] $M_\Phi$ is a bounded operator if and only if $\sum_{j=1} ^\infty \|\varphi_j\|_{D(\delta_1)}^2$ and $\sup_{z \in \D}\sum_{j=1} ^\infty |\varphi_j(z)|^2$ are finite.

\item[{\rm (ii)}] If $\|M_\Phi\| \leq 1$ and $0 < \epsilon^2 \leq \sum_{j=1} ^\infty |\varphi_j(z)|^2$ for all $z \in \D$, then
\begin{align*}
\Phi(1) = (\varphi_1(1), \varphi_2(1), \ldots) \neq 0.
\end{align*}

\item[{\rm (iii)}] If $\|M_\Phi\| \leq 1$ and $f = \sum_{i=1}^\infty[\varphi_i - \varphi_i(1)] \overline{\varphi_i(1)}$, then $f \in M(D(\delta_1))$ and $f(1) = 0$.
\end{itemize}
\end{lemma}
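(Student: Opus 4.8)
The plan is to handle the three parts in turn, using throughout the decomposition of Lemma~\ref{ddel1fuc}: for $h\in D(\delta_1)$ write $h=h(1)+(z-1)\tilde h$ with $\tilde h\in H^2(\D)$ and $D_1(h)=\|\tilde h\|_{H^2(\D)}^2$, and set $g:=\frac{f-f(1)}{z-1}$ and $\psi_j:=\frac{\varphi_j-\varphi_j(1)}{z-1}$, so that $\|g\|_{H^2(\D)}^2=D_1(f)$ and $\|\psi_j\|_{H^2(\D)}^2=D_1(\varphi_j)$. For (i), in the ``if'' direction let $M:=\sup_{z\in\D}\sum_j|\varphi_j(z)|^2$; by the fact recorded after Theorem~\ref{carlcorona}, the row operator $T_\Phi$ on $H^2(\D)$ satisfies $\|T_\Phi\|^2=M$, hence $\sum_j\|\varphi_j h\|_{H^2(\D)}^2\le M\|h\|_{H^2(\D)}^2$ for every $h\in H^2(\D)$. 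I would expand
\[
\|M_\Phi f\|^2=\sum_j\|\varphi_j f\|_{H^2(\D)}^2+\sum_j D_1(\varphi_j f),
\]
bound the first sum by $M\|f\|_{H^2(\D)}^2$, and for the second use the identity $\varphi_j f-\varphi_j(1)f(1)=(z-1)\big(\varphi_j g+f(1)\psi_j\big)$ to get $D_1(\varphi_j f)=\|\varphi_j g+f(1)\psi_j\|_{H^2(\D)}^2\le 2\|\varphi_j g\|_{H^2(\D)}^2+2|f(1)|^2 D_1(\varphi_j)$. Summing, applying $\|T_\Phi\|^2=M$ once more to $g$, and invoking Lemma~\ref{ddel1fuc}(iii) ($|f(1)|^2\le C^2\|f\|_{D(\delta_1)}^2$) yields $\|M_\Phi f\|^2\le\big(2M+2C^2\sum_j D_1(\varphi_j)\big)\|f\|_{D(\delta_1)}^2$, so the two stated conditions suffice. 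For the converse, applying $M_\Phi$ to the constant $1$ gives $\sum_j\|\varphi_j\|_{D(\delta_1)}^2=\|M_\Phi 1\|^2<\infty$, while the uniform pointwise bound comes from the adjoint identity $M_{\varphi_j}^*k_z=\overline{\varphi_j(z)}k_z$ on the reproducing kernel $k_z$ of $D(\delta_1)$: testing $M_\Phi^*$ against the truncated vector with $\varphi_j(z)k_z/\|k_z\|$ in its $j$th slot gives $\sum_{j\le N}|\varphi_j(z)|^2\le\|M_\Phi\|^2$, and $N\to\infty$ gives $\sup_z\sum_j|\varphi_j(z)|^2\le\|M_\Phi\|^2$.

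For (ii) I would argue by contradiction, assuming $\varphi_j(1)=0$ for all $j$. Then $\varphi_j=(z-1)\psi_j$, and evaluating at $z=r\in(0,1)$ with the Szeg\H o kernel bound $|\psi_j(r)|\le\|\psi_j\|_{H^2(\D)}/\sqrt{1-r^2}$ gives $|\varphi_j(r)|^2\le\frac{1-r}{1+r}D_1(\varphi_j)$. Summing and using $\sum_j D_1(\varphi_j)\le\sum_j\|\varphi_j\|_{D(\delta_1)}^2<\infty$ (available from (i), since $\|M_\Phi\|\le1$) yields $\sum_j|\varphi_j(r)|^2\le\frac{1-r}{1+r}\sum_j D_1(\varphi_j)\to0$ as $r\to1^-$, contradicting $\sum_j|\varphi_j(r)|^2\ge\epsilon^2>0$. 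Hence $\Phi(1)\neq0$.

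For (iii), set $c_i:=\varphi_i(1)$; by (i) together with Fatou applied to radial limits, $\sum_i|c_i|^2=\sum_i|\varphi_i(1)|^2\le\|M_\Phi\|^2\le1$, so $\{c_i\}\in l^2$, and $f(1)=0$ is immediate. Since $\varphi_i-c_i=(z-1)\psi_i$, I would write $f=(z-1)\Psi$ with $\Psi:=\sum_i\overline{c_i}\psi_i$; the estimate $\|\Psi\|_{H^2(\D)}\le(\sum_i|c_i|^2)^{1/2}(\sum_i\|\psi_i\|_{H^2(\D)}^2)^{1/2}<\infty$ shows $\Psi\in H^2(\D)$, and because $f(1)=0$ we get $D_1(f)=\|\Psi\|_{H^2(\D)}^2\le\sum_i D_1(\varphi_i)<\infty$, so $f\in D(\delta_1)$. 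Boundedness follows from Cauchy--Schwarz, $|f(z)|\le(\sum_i|\varphi_i(z)-c_i|^2)^{1/2}(\sum_i|c_i|^2)^{1/2}\le2$, using $\sum_i|\varphi_i(z)|^2\le1$ and $\sum_i|c_i|^2\le1$. Thus $f\in D(\delta_1)\cap H^\infty(\D)=M(D(\delta_1))$.

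I expect the crux to be part (i): one must pair two operator-norm estimates correctly, controlling the $H^2(\D)$ contribution uniformly through $\|T_\Phi\|=\sqrt M$ while keeping the Dirichlet contribution finite through $\sum_j D_1(\varphi_j)$, \emph{without} needing the (generally false) summability of $\|\varphi_j\|_\infty^2$, and then extracting the uniform pointwise bound from $\|M_\Phi\|$ via the reproducing-kernel adjoint identity. Once (i) is established, the quantitative radial decay $\sum_j|\varphi_j(r)|^2\le\frac{1-r}{1+r}\sum_j D_1(\varphi_j)$ in (ii) and the $l^2$ Cauchy--Schwarz bookkeeping in (iii) are comparatively routine.
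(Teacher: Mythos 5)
Your proof is correct, and its backbone coincides with the paper's: part (i) rests on the decomposition $h=h(1)+(z-1)\tilde h$ of Lemma \ref{ddel1fuc} together with the pointwise bound coming from the multiplier norm, part (ii) on the growth estimate $|g(z)|\le \|g\|_{H^2}/\sqrt{1-|z|^2}$ applied to the quotient $(\varphi_j-\varphi_j(1))/(z-1)$, and part (iii) on $l^2$ Cauchy--Schwarz. Three differences are worth recording. First, for $\sup_{z\in\D}\sum_j|\varphi_j(z)|^2\le\|M_\Phi\|^2$ in (i) the paper simply cites \cite{TT 04}, while you derive it from the reproducing-kernel identity $M_{\varphi_j}^*k_z=\overline{\varphi_j(z)}k_z$; this makes the lemma self-contained and is legitimate, since $D(\delta_1)\subset H^2(\D)$ has bounded point evaluations. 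Second, in (ii) the paper argues directly, splitting $|\varphi_j(z)|^2\le(1+\eta)|\varphi_j(1)|^2+(1+\tfrac{1}{\eta})|z-1|^2|g_j(z)|^2$, letting $r\to1^-$ and then $\eta\to 0$, which yields the quantitative conclusion $|\Phi(1)|\ge\epsilon$; your contradiction argument yields only $\Phi(1)\ne 0$. That proves the lemma exactly as stated, but the paper later invokes the stronger inequality $|\Phi(1)|\ge\varepsilon$ in the norm estimate of Theorem \ref{d1infinp1}(ii), so if your proof were substituted, one would either add the $\eta$-splitting (your radial estimate gives it with no extra work) or weaken that later bound. Third, in (iii) the paper obtains $f(1)=0$ from norm convergence of the partial sums $f_k\to f$ in $D(\delta_1)$ plus continuity of evaluation at $1$, whereas you factor $f=(z-1)\Psi$ with $\Psi\in H^2(\D)$ and read off $f\in D(\delta_1)$, $D_1(f)=\|\Psi\|^2$, and $f(1)=0$ from the Richter--Sundberg characterization underlying Lemma \ref{ddel1fuc}(i); this is equivalent and slightly more economical, though the phrase ``$f(1)=0$ is immediate'' should be justified from that factorization (the radial limit $(r-1)\Psi(r)\to 0$) rather than termwise, since the value at $1$ of an infinite sum is not a priori the sum of the values.
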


\begin{proof}
(i): Suppose that $M_\Phi$ is bounded from $D(\delta_1)$ to $\oplus_1^\infty D(\delta_1)$ with $\|M_\Phi\| \leq 1$, then $\sup_{z \in \D}\sum_{j=1} ^\infty |\varphi_j(z)|^2 \leq 1$ (see \cite{TT 04}). Let $f =1 \in D(\delta_1)$, then
\begin{align*}
\sum_{j=1} ^\infty \|\varphi_j\|_{D(\delta_1)}^2 &= \|M_\Phi f\|_{\oplus_1^\infty D(\delta_1)}^2\\
&\leq \|M_\Phi\|^2 \|1\|_{D(\delta_1)} \leq 1.
\end{align*}

Conversely suppose $\sup_{z \in \D}\sum_{j=1} ^\infty |\varphi_j(z)|^2 \leq 1$ and $\sum_{j=1} ^\infty \|\varphi_j\|_{D(\delta_1)}^2 \leq 1$. Let $f \in D(\delta_1)$, suppose $f = f(1) + (z-1)g$ for some $g \in H^2(\D)$, then $D_1 (f) = \|g\|_{H^2(\D)}^2$ and
\begin{align*}
&\|M_\Phi f\|_{\oplus_1^\infty D(\delta_1)}^2 = \sum_{j=1} ^\infty \|\varphi_j f\|_{D(\delta_1)}^2\\
& = \sum_{j=1} ^\infty \|\varphi_j f\|_{H^2(\D)}^2 + \sum_{j=1} ^\infty \|\frac{\varphi_j f - (\varphi_j f)(1)}{z-1}\|_{H^2(\D)}^2\\
&\leq \|f\|_{H^2(\D)}^2 + \sum_{j=1} ^\infty \Big[2\|\frac{\varphi_j f(1) - (\varphi_j f)(1)}{z-1}\|_{H^2(\D)}^2 + 2\|\frac{\varphi_j g (z-1)}{z-1}\|_{H^2(\D)}^2\Big]\\
&\leq \|f\|_{H^2(\D)}^2 + 2 |f(1)|^2 \sum_{j=1} ^\infty D_1 (\varphi_j) + 2 \|g\|_{H^2(\D)}^2\\
& \leq 2 \|f\|_{D(\delta_1)} + 2 |f(1)|^2.
\end{align*}
Since $|f(1)| \leq C \|f\|_{D(\delta_1)}$ (see \cite{S 98}), we conclude that $M_\Phi$ is bounded from $D(\delta_1)$ to $\oplus_1^\infty D(\delta_1)$.

(ii): Suppose $\{g_j\}_{j=1}^\infty \subseteq H^2(\D)$ such that
\begin{align*}
\varphi_j (z) = \varphi_j (1) + (z-1) g_j(z), \quad \text{and} \quad D_1(\varphi_j) = \|g_j\|_{H^2(\D)}^2, j = 1, 2,\cdots.
\end{align*}
Note that
\begin{align*}
|\varphi_j(z)|^2 &\leq |\varphi_j (1)|^2 + |z-1|^2  |g_j(z)|^2 + 2|\varphi_j (1)||z-1|  |g_j(z)|\\
&\leq (1+\eta) |\varphi_j (1)|^2 + (1+\frac{1}{\eta})|z-1|^2  |g_j(z)|^2,
\end{align*}
where $\eta$ is any positive number. Then we have
\begin{align*}
\epsilon^2 &\leq \sum_{j=1} ^\infty |\varphi_j(z)|^2\leq \sum_{j=1} ^\infty (1+\eta) |\varphi_j (1)|^2 + (1+\frac{1}{\eta})|z-1|^2  |g_j(z)|^2\\
&\leq \sum_{j=1} ^\infty (1+\eta) |\varphi_j (1)|^2 + (1+\frac{1}{\eta}) \frac{|z-1|^2}{1-|z|^2} \sum_{j=1} ^\infty \|\varphi_j\|_{D(\delta_1)}^2\\
&\leq \sum_{j=1} ^\infty (1+\eta) |\varphi_j (1)|^2 + (1+\frac{1}{\eta}) \frac{|z-1|^2}{1-|z|^2} \quad \text{for all} ~ z \in \D,
\end{align*}
where in the last inequality we used part (i). Let $z = r \rightarrow 1^-$ we get
\begin{align*}
\epsilon^2 &\leq \sum_{j=1} ^\infty (1+\eta) |\varphi_j (1)|^2 := (1+\eta) |\Phi(1)|^2.
\end{align*}
Let $\eta \rightarrow 0$, we have $|\Phi(1)|^2 = \sum_{j=1} ^\infty |\varphi_j (1)|^2 \geq \epsilon^2$, thus $\Phi(1) = (\varphi_1(1), \varphi_2(1), \ldots) \neq 0$.

(iii) Suppose $\|M_\Phi\| \leq 1$ and $f = \sum_{i=1}^\infty[\varphi_i - \varphi_i(1)] \overline{\varphi_i(1)}$, then $f \in H^\infty (\D)$ and
\begin{align*}
\|f\|_{D(\delta_1)}^2 &= \|\sum_{i=1}^\infty[\varphi_i - \varphi_i(1)] \overline{\varphi_i(1)}\|_{D(\delta_1)}^2\\
&\leq \sum_{i=1}^\infty \|\varphi_i - \varphi_i(1)\|_{D(\delta_1)}^2 \sum_{i=1}^\infty |\varphi_i(1)|^2\\
&\leq 2 \Big[\sum_{i=1}^\infty \|\varphi_i\|_{D(\delta_1)}^2 + \sum_{i=1}^\infty |\varphi_i(1)|^2\Big] \sum_{i=1}^\infty |\varphi_i(1)|^2\\
&\leq 4,
\end{align*}
where in the last inequality we used part (i).

For any $k \in \N$, let $f_k = \sum_{i=1}^k [\varphi_i - \varphi_i(1)] \overline{\varphi_i(1)}$. Then $f_k \rightarrow f \in D(\delta_1)$, note that $f_k(1) = 0$ and point evaluation at $1$ is continuous, we conclude that $f(1) = 0$.
\end{proof}


Now we can prove the corona theorem for $M(D(\delta_1))$.

\begin{theorem}\label{d1infinp1}
Let $\{\varphi_j\}_{j=1}^\infty \subseteq M(D(\delta_1))$. Suppose that $\|M_\Phi\| \leq 1$ and $0 < \epsilon^2 \leq \sum_{j=1} ^\infty |\varphi_j(z)|^2$ for all $z \in \D$. Then there exists $\{b_j\}_{j=1}^\infty \subseteq M(D(\delta_1))$ such that
\begin{itemize}
\item[{\rm (i)}] $\Phi(z) B(z)^\top = 1$ for all $z \in \D$,
and
\item[{\rm (ii)}] $\|M_B\| \leq \frac{1}{\varepsilon} (2 + 8\frac{C_0}{\epsilon^2} \ln \frac{1}{\epsilon^2})^{1/2}$.
\end{itemize}
\end{theorem}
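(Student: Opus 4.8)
The plan is to manufacture $B$ from the $H^\infty$ corona solution of $\{\varphi_j\}$ plus a constant piece read off from the boundary value $\Phi(1)$, and then to correct so that every coordinate lands back in $M(D(\delta_1))$. Since $\|M_\Phi\|\le 1$, Lemma~\ref{conat1no0}(i) gives $\sup_{z\in\D}\sum_j|\varphi_j(z)|^2\le 1$, so $\{\varphi_j\}$ meets the hypotheses of Theorem~\ref{carlcorona} with lower bound $\epsilon^2$. Invoking it produces $E=(e_1,e_2,\ldots)\subseteq H^\infty(\D)$ with $\Phi(z)E(z)^\top=\sum_j\varphi_je_j=1$ on $\D$ and $N_E:=\sup_{z\in\D}\sum_j|e_j(z)|^2\le \frac{C_0}{\epsilon^2}\ln\frac1{\epsilon^2}$. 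The whole difficulty is that these $e_j$ need not lie in $M(D(\delta_1))$.

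Next I would exploit that $\Phi(1)\neq 0$ (Lemma~\ref{conat1no0}(ii)), i.e.\ $|\Phi(1)|^2=\sum_j|\varphi_j(1)|^2\ge\epsilon^2$, and set the constant vector $c=(c_j)$ with $c_j=\overline{\varphi_j(1)}/|\Phi(1)|^2$, so that $\|c\|_{\ell^2}^2=1/|\Phi(1)|^2\le 1/\epsilon^2$ and
\[
\Phi(z)c^\top=\frac{\sum_j\varphi_j(z)\overline{\varphi_j(1)}}{|\Phi(1)|^2}=1-h(z),\qquad h:=-\frac{1}{|\Phi(1)|^2}\sum_j\bigl[\varphi_j-\varphi_j(1)\bigr]\overline{\varphi_j(1)} .
\]
The sum defining $h$ is exactly the function handled in Lemma~\ref{conat1no0}(iii), so $h\in M(D(\delta_1))$ and $h(1)=0$. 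I then set $b_j:=c_j+h\,e_j$ and $B=(b_1,b_2,\ldots)$. Each $b_j$ lies in $M(D(\delta_1))$: the constant $c_j$ trivially, and $h e_j\in M(D(\delta_1))$ by Lemma~\ref{mulalgduk}, since $e_j\in H^\infty(\D)$ and $h(1)=0$ (the case $f(\zeta)=0$). The Bezout identity (i) is then immediate,
\[
\Phi B^\top=\Phi c^\top+h\,\Phi E^\top=(1-h)+h\cdot 1=1\qquad\text{on }\D .
\]

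For the norm bound (ii) I would first control the pointwise $\ell^2$ norm of $B$. Using $\|c\|_{\ell^2}^2\le 1/\epsilon^2$ and $\|h\|_\infty\le 2/\epsilon$ (which comes from $|h(z)|\le |\Phi(1)|^{-1}(\sum_j|\varphi_j(z)-\varphi_j(1)|^2)^{1/2}$, the inner sum being $\le 4$ by Lemma~\ref{conat1no0}(i)), one gets
\[
\sum_j|b_j(z)|^2\le 2\|c\|_{\ell^2}^2+2\|h\|_\infty^2\sum_j|e_j(z)|^2\le \frac{2}{\epsilon^2}+\frac{8}{\epsilon^2}N_E=\frac1{\epsilon^2}\bigl(2+8N_E\bigr),
\]
precisely the square of the target constant. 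To turn this into a bound on $\|M_B\|$ I would rerun the computation in the proof of Lemma~\ref{conat1no0}(i): writing $f=f(1)+(z-1)g$, the term $\sum_j\|b_jf\|_{H^2(\D)}^2$ is dominated by $(\sup_z\sum_j|b_j|^2)\|f\|_{H^2(\D)}^2$, and the local Dirichlet part by the same supremum times $D_1(f)$ plus a term $|f(1)|^2\sum_jD_1(b_j)$. Since $b_j(1)=c_j$, we have $D_1(b_j)=D_1(he_j)$, and the decisive estimate is the $\ell^2$ bound
\[
\sum_jD_1(he_j)=\int_\T\Bigl|\tfrac{h}{z-1}\Bigr|^2\sum_j|e_j|^2\,\tfrac{d\theta}{2\pi}\le N_E\,D_1(h),
\]
combined with the refinement $D_1(h)\le D_1(F)/|\Phi(1)|^4\le 1/|\Phi(1)|^2\le 1/\epsilon^2$, where $F=(z-1)\sum_i\overline{\varphi_i(1)}g_i$ yields $D_1(F)=\|\sum_i\overline{\varphi_i(1)}g_i\|_{H^2(\D)}^2\le|\Phi(1)|^2\sum_iD_1(\varphi_i)\le|\Phi(1)|^2$ by Cauchy--Schwarz and Lemma~\ref{conat1no0}(i).

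The hard part will be this last assembly: extracting the exact constant $\frac1\epsilon(2+8N_E)^{1/2}$ rather than a larger multiple of it. A crude use of the triangle inequality applied to the splitting $B=c+hE$, or of $(a+b)^2\le 2a^2+2b^2$ at the level of the full $D(\delta_1)$ norm, loses a factor and leaves an awkward $|f(1)|^2 D_1(h)$ contribution; so the estimate must be organized as a single non-lossy computation in which the constant part $c$ enters only through $\|c\|_{\ell^2}$ and the variable part $hE$ only through $N_E$ and $D_1(h)$, using $b_j(1)=c_j$ and $h(1)=0$ throughout. Once this bookkeeping is carried out, the pointwise bound displayed above simultaneously governs the $H^2(\D)$ and the local Dirichlet pieces and delivers (ii).
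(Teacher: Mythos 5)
Your part (i) is correct and in fact more direct than the paper's argument: writing $h=-\frac{1}{|\Phi(1)|^2}\sum_i[\varphi_i-\varphi_i(1)]\overline{\varphi_i(1)}$, your $b_j=c_j+he_j$ is \emph{exactly} the paper's solution $b_j(z)=\frac{\overline{\varphi_j(1)}}{|\Phi(1)|^2}-\frac{\sum_i[\varphi_i(z)-\varphi_i(1)]\overline{\varphi_i(1)}}{|\Phi(1)|^2}e_j(z)$, but you verify the Bezout identity by two lines of algebra, whereas the paper manufactures $B$ via the Koszul-type matrices of Lemma \ref{koszmats} and then extracts the same formula. Your membership argument ($h\in M(D(\delta_1))$ with $h(1)=0$ from Lemma \ref{conat1no0}(iii), then $he_j\in M(D(\delta_1))$ from Lemma \ref{mulalgduk}) is the paper's.

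Part (ii), however, is not proved, and the gap is not mere bookkeeping: the organizing principle you propose cannot yield the stated constant. A pointwise bound on $\sup_z\sum_j|b_j(z)|^2$ can never control $\|M_B\|$ (column multipliers of $D(\delta_1)$ also require $\sum_j\|b_j\|_{D(\delta_1)}^2<\infty$, cf.\ Lemma \ref{conat1no0}(i)), so your closing claim that the pointwise bound "simultaneously governs" both pieces is false in principle; and your route through $|f(1)|^2\sum_jD_1(b_j)\le|f(1)|^2N_ED_1(h)$ must be closed with Lemma \ref{ddel1fuc}(iii), whose unspecified constant $C$ then contaminates the bound, giving $\frac{1}{\epsilon}(c_1+c_2N_E)^{1/2}$ for some absolute constants but not the claimed one. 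The estimate that works is precisely the splitting you dismissed as lossy: $\sum_j\|b_jf\|^2_{D(\delta_1)}\le 2\|c\|_{\ell^2}^2\|f\|^2_{D(\delta_1)}+2\sum_j\|he_jf\|^2_{D(\delta_1)}$, where the factor $2$ costs nothing since the target is $\frac{1}{\epsilon^2}(2+8N_E)=\frac{2}{\epsilon^2}(1+4N_E)$. Because $h(1)=0$, the function $he_jf$ vanishes at $1$, so \emph{both} pieces of its $D(\delta_1)$-norm are boundary integrals carrying the factor $|e_j|^2$:
\begin{align*}
\sum_j\|he_jf\|^2_{D(\delta_1)}
=\int_\T\Bigl(|hf|^2+\Bigl|\tfrac{hf}{z-1}\Bigr|^2\Bigr)\sum_j|e_j|^2\,\tfrac{|dz|}{2\pi}
\le N_E\,\|hf\|^2_{D(\delta_1)},
\end{align*}
so no $|f(1)|$ term ever appears. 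The decisive ingredient is then a \emph{multiplier} bound for $h$, which does not follow from $D_1(h)$ and $\|h\|_\infty$ but from the structure of $h$: by Cauchy--Schwarz against $(\overline{\varphi_i(1)})_i$ and then $\|M_\Phi\|\le1$, $|\Phi(1)|\le1$,
\begin{align*}
\|hf\|^2_{D(\delta_1)}
\le\frac{1}{|\Phi(1)|^2}\sum_i\|(\varphi_i-\varphi_i(1))f\|^2_{D(\delta_1)}
\le\frac{2\bigl(\|M_\Phi\|^2+|\Phi(1)|^2\bigr)}{|\Phi(1)|^2}\|f\|^2_{D(\delta_1)}
\le\frac{4}{|\Phi(1)|^2}\|f\|^2_{D(\delta_1)}.
\end{align*}
Combining gives $\sum_j\|b_jf\|^2_{D(\delta_1)}\le\frac{2}{|\Phi(1)|^2}(1+4N_E)\|f\|^2_{D(\delta_1)}$, hence (ii) since $|\Phi(1)|\ge\epsilon$. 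This is in essence the paper's computation (its display (\ref{coressumff})); your auxiliary facts $D_1(h)\le1/\epsilon^2$ and $\sum_jD_1(he_j)\le N_ED_1(h)$ are true but insufficient to assemble the bound.
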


\begin{proof}
(i): By Theorem \ref{carlcorona}, there exists an $E \in H^\infty_{l^2}(\D)$ such that
\begin{align*}
\Phi(z) E(z)^\top = 1 \quad \text{for} ~z \in \D,
\end{align*}
and
\begin{align*}
\|E\|_{H^\infty_{l^2}(\D)}^2 := \sup_{z \in \D} \sum_{j=1} ^\infty |e_j(z)|^2 \leq \frac{C_0}{\epsilon^2} \ln \frac{1}{\epsilon^2}.
\end{align*}

Let $A = \Phi(z), D = E(z)$ in Lemma \ref{koszmats}, then
\begin{align*}
I - E(z)^\top \Phi(z) = Q_{\Phi(z)} Q_{E(z)}^\top,
\end{align*}
thus
\begin{align}\label{dlincorequ}
I = E(z)^\top \Phi(1) +  E(z)^\top (\Phi(z) - \Phi(1)) + Q_{\Phi(z)} Q_{E(z)}^\top.
\end{align}
Let $\Phi(1)^* = (\overline{\varphi_1(1)}, \overline{\varphi_2(1)}, \ldots)^\top$, then $|\Phi(1)|^2 = \Phi(1) \Phi(1)^*$ and
\begin{align}
\Phi(1)^* = E(z)^\top |\Phi(1)|^2 +  E(z)^\top [\Phi(z) - \Phi(1)]\Phi(1)^* \label{d1infkos}\\
\hspace{5cm}+ Q_{\Phi(z)} Q_{E(z)}^\top \Phi(1)^* \notag.
\end{align}

By Lemma \ref{conat1no0} we have $\Phi(1) = (\varphi_1(1), \varphi_2(1), \ldots) \neq 0$, then from (\ref{d1infkos}) we have
\begin{align*}
\frac{\Phi(1)^*}{|\Phi(1)|^2} = E(z)^\top +  E(z)^\top \frac{[\Phi(z) - \Phi(1)] \Phi(1)^*}{|\Phi(1)|^2}+ Q_{\Phi(z)} Q_{E(z)}^\top \frac{\Phi(1)^*}{|\Phi(1)|^2},
\end{align*}
therefore,
\begin{align*}
&E(z)^\top + Q_{\Phi(z)} Q_{E(z)}^\top \frac{\Phi(1)^*}{|\Phi(1)|^2} = \frac{\Phi(1)^*}{|\Phi(1)|^2} - \frac{[\Phi(z) - \Phi(1)] \Phi(1)^*}{|\Phi(1)|^2} E(z)^\top\\
& = \frac{\Phi(1)^*}{|\Phi(1)|^2} - \frac{\sum_{i =1}^\infty [\varphi_i(z) - \varphi_i(1)] \overline{\varphi_i(1)}}{|\Phi(1)|^2} E(z)^\top.
\end{align*}

Let $B(z)^\top = E(z)^\top + Q_{\Phi(z)} Q_{E(z)}^\top \frac{\Phi(1)^*}{|\Phi(1)|^2}$. From Lemma \ref{koszmats}, we have
\begin{align*}
\Phi(z) B(z)^\top = 1 \quad \text{for} ~z \in \D,
\end{align*}
and
$$b_j (z) = \frac{\overline{\varphi_j(1)}}{|\Phi(1)|^2} - \frac{\sum_{i =1}^\infty [\varphi_i(z) - \varphi_i(1)] \overline{\varphi_i(1)}}{|\Phi(1)|^2} e_j(z), j = 1, 2, 3, \cdots.$$

By Lemma \ref{conat1no0} we have $f:=\sum_{i =1}^\infty [\varphi_i - \varphi_i(1)] \overline{\varphi_i(1)} \in M(D(\delta_1))$ and $f(1) = 0$. Thus from Lemma \ref{mulalgduk} we have $b_j \in H^\infty (\D) \cap D(\delta_1) = M (D(\delta_1)), j = 1, 2,\cdots$.

(ii): Let $f \in D(\delta_1)$, then
\begin{align*}
&\sum_{j=1} ^\infty \|b_j f\|_{D(\delta_1)}^2 \\
&\leq \frac{2}{|\Phi(1)|^4} \Big[\sum_{j=1} ^\infty \|\overline{\varphi_j(1)} f\|_{D(\delta_1)}^2 + \sum_{j=1} ^\infty \|\sum_{i =1}^\infty [\varphi_i - \varphi_i(1)] \overline{\varphi_i(1)} e_j f\|_{D(\delta_1)}^2\Big]\\
&\leq \frac{2}{|\Phi(1)|^4} \Big[|\Phi(1)|^2 \|f\|_{D(\delta_1)}^2 + \sum_{j=1} ^\infty \sum_{i =1}^\infty \|[\varphi_i - \varphi_i(1)] e_j f\|_{D(\delta_1)}^2 |\Phi(1)|^2 \Big]\\
& = \frac{2}{|\Phi(1)|^2} \Big[\|f\|_{D(\delta_1)}^2 + \sum_{j=1} ^\infty \sum_{i =1}^\infty \|[\varphi_i - \varphi_i(1)] e_j f\|_{D(\delta_1)}^2\Big]
\end{align*}

Note that
\begin{align}\label{coressumff}
&\sum_{j=1} ^\infty \sum_{i =1}^\infty \|[\varphi_i - \varphi_i(1)] e_j f\|_{D(\delta_1)}^2\\
& = \sum_{j=1} ^\infty \sum_{i =1}^\infty \|[\varphi_i - \varphi_i(1)] e_j f\|_{H^2(\D)}^2 + \sum_{j=1} ^\infty \sum_{i =1}^\infty \|\frac{\varphi_i - \varphi_i(1)}{z-1} e_j f\|_{H^2(\D)}^2 \notag\\
&\leq \|E\|_{H^\infty_{l^2}(\D)}^2 \sum_{i =1}^\infty \Big[\|(\varphi_i - \varphi_i(1)) f\|_{H^2(\D)}^2 + \|\frac{\varphi_i - \varphi_i(1)}{z-1} f\|_{H^2(\D)}^2 \Big] \notag\\
&= \|E\|_{H^\infty_{l^2}(\D)}^2 \sum_{i =1}^\infty \|(\varphi_i - \varphi_i(1)) f\|_{D(\delta_1)}^2 \notag\\
&\leq 2\|E\|_{H^\infty_{l^2}(\D)}^2 \Big[\sum_{i =1}^\infty \|\varphi_i f\|_{D(\delta_1)}^2 + \sum_{i =1}^\infty \|\varphi_i(1) f\|_{D(\delta_1)}^2\Big] \notag\\
&\leq 2 \|E\|_{H^\infty_{l^2}(\D)}^2 \Big[\|M_\Phi\|^2 + |\Phi(1)|^2\Big] \|f\|_{D(\delta_1)}^2 \notag\\
&\leq 4 \|E\|_{H^\infty_{l^2}(\D)}^2 \|f\|_{D(\delta_1)}^2. \notag
\end{align}
Thus
\begin{align*}
\sum_{j=1} ^\infty \|b_j f\|_{D(\delta_1)}^2 \leq \frac{2}{|\Phi(1)|^2} \Big[\|f\|_{D(\delta_1)}^2 + 4 \|E\|_{H^\infty_{l^2}(\D)}^2 \|f\|_{D(\delta_1)}^2\Big],
\end{align*}
therefore
\begin{align*}
\|M_B\| &\leq \Big[\frac{2}{|\Phi(1)|^2} (1 + 4 \|E\|_{H^\infty_{l^2}(\D)}^2) \Big]^{1/2}\\
& \leq \frac{1}{\varepsilon} (2 + 8\frac{C_0}{\epsilon^2} \ln \frac{1}{\epsilon^2})^{1/2},
\end{align*}
where in the last inequality we used $|\Phi(1)| \geq \varepsilon$ in the proof of Lemma \ref{conat1no0}.
\end{proof}

\begin{remark}\label{corthmbddsol}
From equation (\ref{dlincorequ}), we can get another corona solution $D(z) = (d_1(z), d_2(z),\cdots,)$ such that
\begin{align}\label{bezeqinfmf}
\sum_{j=1}^\infty \varphi_j(z) d_j(z) = 1, \quad z \in \D.
\end{align}
Suppose $|\varphi_{1}(1)| = \max_{\{j=1,2,\ldots\}} |\varphi_{j}(1)|$, let $d_1 (z) = \frac{1}{\varphi_1(1)} - \frac{\varphi_1(z) - \varphi_1(1)}{\varphi_1(1)} e_1(z)$, $d_j (z) = - \frac{\varphi_1(z) - \varphi_1(1)}{\varphi_1(1)} e_j(z), j = 2, 3, \cdots$. Then (\ref{bezeqinfmf}) is satisfied and we have
$$\|M_D\| \leq \Big[\frac{2}{|\varphi_1(1)|^2} + 4 \big(\frac{\|\varphi_1\|_{M(D(\delta_1))}^2}{|\varphi_1(1)|^2} + 1\big) \frac{C_0}{\epsilon^2} \ln \frac{1}{\epsilon^2} \Big]^{1/2},$$
but in this case the bound of the corona solution depends on the chosen $\varphi_1$. It would be of interested to determine the best possible
bound for the solution $B$ in terms of $\|M_\Phi\|$ and $\varepsilon$.
\end{remark}

\subsection{}
For general $k$, we use induction to prove Theorem \ref{d1infin}.
\begin{proof}
The idea is the same as in Theorem \ref{d1infinp1}. We sketch a proof here.

If $k=1$, then by Theorem \ref{d1infinp1}, it is true.

Suppose $k=l \geq 1$, it is true.

If $k=l+1$, note that $\{\varphi_j\}_{j=1}^\infty \subseteq M(D(\mu_{l+1})) \subseteq M(D(\mu_{l}))$,
by induction, there exists $\{e_j\}_{j=1}^\infty \subseteq M(D(\mu_{l}))$ such that
\begin{align*}
\Phi(z) E(z)^\top = 1 \quad \text{for} ~z \in \D,
\end{align*}
and
\begin{align*}
\|M_E\| \leq \frac{1}{\varepsilon} \Big(2 +16\|M_{B_{l-1}}\|^2 \Big)^{1/2},
\end{align*}

Following the same argument as in Lemma \ref{conat1no0}, we have $\Phi(\zeta_{l+1}) = (\varphi_1(\zeta_{l+1}), \varphi_2(\zeta_{l+1}),\ldots) \neq 0$ and
\begin{align}\label{dfininfkos}
I = E(z)^\top \Phi(\zeta_{l+1}) +  E(z)^\top (\Phi(z) - \Phi(\zeta_{l+1})) + Q_{\Phi(z)} Q_{E(z)}^\top.
\end{align}

Thus
$$b_j(z) = \frac{\overline{\varphi_j(\zeta_{l+1})}}{|\Phi(\zeta_{l+1})|^2} - \frac{\sum_{i =1}^\infty [\varphi_i(z) - \varphi_i(\zeta_{l+1})] \overline{\varphi_i(\zeta_{l+1})}}{|\Phi(\zeta_{l+1})|^2} e_j(z) \in M(D(\mu_l)),$$
and $\Phi(z) B(z)^\top = 1$ for all $z \in \D$.

Now we estimate $\|M_B\|$. Let $f \in D(\mu_{l+1})$, then
\begin{align*}
&\sum_{j=1} ^\infty \|b_j f\|_{D(\mu_{l+1})}^2\\
&\leq \frac{2}{|\Phi(\zeta_{l+1}))|^2} \Big[\|f\|_{D(\mu_{l+1})}^2 + \sum_{j=1} ^\infty \sum_{i =1}^\infty \|[\varphi_i - \varphi_i(\zeta_{l+1}))] e_j f\|_{D(\mu_{l+1})}^2\Big].
\end{align*}

Suppose $\mu_{l+1} = \mu_l + \delta_{\zeta_{l+1}}$, note that using inequality (\ref{coressumff}) we have
\begin{align*}
&\sum_{j=1} ^\infty \sum_{i =1}^\infty\|[\varphi_i - \varphi_i(\zeta_{l+1}))] e_j f\|_{D(\mu_{l+1})}^2\\
& \leq \sum_{j=1} ^\infty \sum_{i =1}^\infty\|[\varphi_i - \varphi_i(\zeta_{l+1}))] e_jf\|_{D(\mu_{l})}^2\\
&\hspace{3cm} + \sum_{j=1} ^\infty \sum_{i =1}^\infty\|[\varphi_i - \varphi_i(\zeta_{l+1}))] e_j f\|_{D(\delta_{\zeta_{l+1}})}^2\\
& \leq \sum_{i=1} ^\infty \|M_E\|^2 \|[\varphi_i - \varphi_i(\zeta_{l+1}))] f\|_{D(\mu_{l})}^2 + 4 \|E\|_{H^\infty_{l^2}(\D)}^2 \|f\|_{D(\delta_{\zeta_{l+1}})}^2\\
&\leq \|M_E\|^2 2\Big[\|M_\Phi\| + |\Phi(\zeta_{l+1})|^2) \Big] \|f\|_{D(\mu_{l+1})}^2 + 4 \|E\|_{H^\infty_{l^2}(\D)}^2 \|f\|_{D(\delta_{\zeta_{l+1}})}^2\\
&\leq 4 \|M_E\|^2 \|f\|_{D(\mu_{l+1})}^2 + 4 \|M_E\|^2 \|f\|_{D(\mu_{l+1})}^2\\
&= 8 \|M_E\|^2 \|f\|_{D(\mu_{l+1})}^2.
\end{align*}

Thus
\begin{align*}
\sum_{j=1} ^\infty \|b_j f\|_{D(\mu_{l+1})}^2 & \leq \frac{2}{|\Phi(\zeta_{l+1}))|^2} \Big[\|f\|_{D(\mu_{l+1})}^2 + 8 \|M_E\|^2 \|f\|_{D(\mu_{l+1})}^2 \Big]\\
& \leq \frac{1}{\varepsilon^2} \Big(2 +16\|M_E\|^2 \Big) \|f\|_{D(\mu_{l+1})}^2,
\end{align*}
and so $\|M_B\| \leq \frac{1}{\varepsilon} \Big(2 +16\|M_E\|^2 \Big)^{1/2}$.
\end{proof}

\section{Bass stable rank for $M(D(\sum_{i=1}^k a_i \delta_{\zeta_i}))$}
The notion of stable rank of a ring was introduced by Bass \cite{B 64} to facilitate computations in algebraic K-theory. Let us recall the main definition.
\begin{definition}
Let $\HA$ be any ring with identity 1. An n-tuple $a = (a_1, \ldots, a_n) \in \HA^n$ is called unimodular or invertible, if there exists an n-tuple $b = (b_1, \ldots, b_n) \in \HA^n$ such that $\sum_{i=1}^n a_i b_i =1$. The set of all invertible n-tuples is denoted by $U_n(\HA)$. An (n+1)-tuple $x = (x_1, \ldots, x_{n+1}) \in \HA^{n+1}$ is called reducible, if there exists an n-tuple $y = (y_1, \ldots, y_n)$ such that $(x_1 + y_1 x_{n+1}, \ldots, x_n + y_n x_{n+1})$ is invertible. The Bass stable rank of $\HA$ is the least n such that every invertible (n+1)-tuple is reducible.
\end{definition}

In recent years, the Bass stable rank has been studied by many authors in the setting of Banach algebras. Jones, Marshall and Wolff \cite{JMW 86} showed that the Bass stable rank of the disc algebra $A(\D)$ is one; Treil \cite{Tr 92} proved that the Bass stable rank of $H^\infty(\D)$ is one; and in \cite{MSW 10}, Mortini, Sasane, and Wick showed that the Bass stable rank of $\C+BH^\infty$ and $A_B$ are one as well. In this paper, we show that the Bass stable rank of $M(D(\mu_k))$ is also one, where $\mu_k = \sum_{i=1}^k a_i \delta_{\zeta_i}$.

First, we prove that the Bass stable rank of $M(D(\delta_1)) = D(\delta_1) \cap H^\infty(\D)$ is one.

\begin{lemma}\label{bsrdd1}
The Bass stable rank of $D(\delta_1) \cap H^\infty(\D)$ is one.
\end{lemma}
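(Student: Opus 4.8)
The statement to prove is that every unimodular pair in $K := D(\delta_1) \cap H^\infty(\D)$ is reducible: given $f, g \in K$ and $a, b \in K$ with $af + bg = 1$, I must produce $h \in K$ such that $f + hg$ is invertible in $K$. The plan is to reduce this, via an invertibility criterion, to the Bass stable rank one property of $H^\infty(\D)$ (Treil), the only extra work being to force the corrector to lie in $D(\delta_1)$.

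First I would record the criterion that makes the problem tractable: for $F \in K$, $F$ is invertible in $K$ if and only if $F$ is bounded below on $\D$, equivalently $F$ is invertible in $H^\infty(\D)$. One direction is trivial. For the other, suppose $|F| \geq \delta > 0$ on $\D$; then $1/F \in H^\infty(\D)$, and since $F(1) = \lim_{r \to 1^-} F(r)$ satisfies $|F(1)| \geq \delta$, Lemma \ref{ddel1fuc} gives $\frac{F - F(1)}{z-1} \in H^2(\D)$. Writing $\frac{1/F - 1/F(1)}{z-1} = -\frac{1}{F(1)F}\cdot\frac{F - F(1)}{z-1}$ and bounding $\frac{1}{F(1)F}$ by $\delta^{-2}$, I obtain $D_1(1/F) \leq \delta^{-4}\,D_1(F) < \infty$, so $1/F \in D(\delta_1)$ and hence $1/F \in K$. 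Thus invertibility in $K$ coincides with invertibility in $H^\infty(\D)$, and the only genuine difference from the $H^\infty(\D)$ problem will be the requirement that the corrector itself lie in $K$.

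Next I would normalize at the distinguished point. Evaluating $af + bg = 1$ at $1$ gives $a(1)f(1) + b(1)g(1) = 1$, so $(f(1), g(1)) \neq (0,0)$. Hence I may choose a constant $c \in \C$ with $f(1) + c\,g(1) \neq 0$ (take $c = 0$ when $g(1) = 0$, in which case $f(1) \neq 0$). Replacing $f$ by $f + cg$ is legitimate, since $a(f + cg) + (b - ca)g = 1$ and a corrector $h_1$ for $(f + cg,\, g)$ yields the corrector $c + h_1 \in K$ for $(f, g)$; so I may assume $f(1) \neq 0$. With this in hand, I would invoke Treil's theorem \cite{Tr 92} that the Bass stable rank of $H^\infty(\D)$ is one: as $a, b \in K \subseteq H^\infty(\D)$, the pair $(f, g)$ is unimodular in $H^\infty(\D)$, so there is $h_0 \in H^\infty(\D)$ with $u := f + h_0 g$ bounded below on $\D$, i.e.\ a unit of $H^\infty(\D)$. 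By the criterion above, if $h_0$ happened to lie in $K$ I would be finished; the remaining difficulty is that $h_0$ need only be in $H^\infty(\D)$, so $u$ need not lie in $D(\delta_1)$.

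The hard part is therefore the descent: converting $h_0 \in H^\infty(\D)$ into a corrector $h \in K$ while keeping $f + hg$ bounded below. The mechanism I would exploit is that the obstruction to membership in $D(\delta_1)$ is concentrated entirely at the single boundary point $1$, and that the normalization $f(1) \neq 0$ renders the correction inessential there: near $1$ the function $f$ already supplies a lower bound, so the summand $h_0 g$ is not needed to keep $f + h_0 g$ bounded below. Since boundedness below is an open condition, it survives replacing $h_0$ by any $h \in H^\infty(\D)$ with $\|(h - h_0)g\|_\infty$ smaller than the lower bound of $u$. The balance to strike is to make this perturbation small while simultaneously arranging $\frac{h - h(1)}{z-1} \in H^2(\D)$, that is $h \in D(\delta_1)$, by modifying $h_0$ only near $1$ (where it may be replaced by a function analytic across $1$, hence of finite local Dirichlet integral). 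Carrying this out yields $h \in D(\delta_1) \cap H^\infty(\D) = K$ with $f + hg$ still bounded below, and the criterion of the first step then gives $(f + hg)^{-1} \in K$, establishing reducibility. This localization step — producing a corrector with finite local Dirichlet integral at $1$ without destroying the lower bound — is the delicate point, and is where I expect to follow the pattern of Lemma 2.1 of \cite{MSW 10} together with the multiplier estimates of Lemma \ref{mulalgduk}.
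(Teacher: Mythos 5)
Your first three steps are sound: the invertibility criterion ($F$ invertible in $K = D(\delta_1)\cap H^\infty(\D)$ iff $F$ is bounded below, proved via $D_1(1/F)\le \delta^{-4}D_1(F)$) is correct and is in fact a nice elementary substitute for the paper's appeal to the corona theorem for $M(D(\delta_1))$; the normalization to $f(1)\neq 0$ by replacing $f$ with $f+cg$ is a legitimate variant of the paper's Case 2; and Treil's theorem applies as you say. But the proof breaks at exactly the step you flag as ``the hard part,'' and it breaks for a structural reason, not merely for lack of detail. Your descent rests on the claim that since $f(1)\neq 0$, the function $f$ is bounded below near $1$, so the corrector term is ``inessential there.'' This is false. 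For $f\in D(\delta_1)$ the value $f(1)$ is only a radial (indeed nontangential) limit via Lemma \ref{ddel1fuc}; it gives no lower bound on $|f|$ in any disc neighborhood of $1$. Concretely, a Blaschke product $B$ whose zeros $z_n\to 1$ tangentially with $\sum_n \frac{1-|z_n|^2}{|1-z_n|^2}<\infty$ satisfies $D_1(B)<\infty$, so $B\in K$ and $|B(1)|=1$, yet $B(z_n)=0$ with $z_n\to 1$. For such an $f$ the term $h_0g$ is genuinely needed near $1$ to keep $f+h_0g$ bounded below, so no modification of $h_0$ that is ``supported near $1$'' can be harmless. Moreover, even setting this aside, the proposed construction --- replace $h_0$ near $1$ by something analytic across $1$ while keeping $\|(h-h_0)g\|_\infty$ small globally --- is not carried out and has no evident implementation: analytic functions cannot be cut off near a boundary point, and $h_0\in H^\infty(\D)$ need not be approximable in sup norm by functions regular at $1$.

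The paper's proof closes this gap with a different and cleaner mechanism, which you should compare against your plan: rather than correcting the Treil solution after the fact, one corrects the \emph{pair} before applying Treil. Since $f(1)\neq 0$, the pair $(f,(f-f(1))h)$ is bounded below on $\D$ (check the two regions $|f(z)-f(1)|\ge |f(1)|/2$ and $|f(z)-f(1)|\le |f(1)|/2$ separately). Treil's theorem then produces $g\in H^\infty(\D)$ with $f+g\big[(f-f(1))h\big]$ invertible in $H^\infty(\D)$. The point is that the corrector for the original pair $(f,h)$ is $g\,(f-f(1))$, and this lies in $K$ for \emph{every} $g\in H^\infty(\D)$, by Lemma \ref{mulalgduk}, precisely because $f-f(1)$ vanishes at $1$: multiplication by an arbitrary bounded analytic function preserves $D(\delta_1)$ on functions vanishing at the point mass. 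So the descent from $H^\infty$ to $K$ is automatic --- no perturbation, no localization, no lower bound on $f$ near $1$ is ever needed --- and your own invertibility criterion (or the corona theorem, as in the paper) finishes the argument. This prefactoring trick, which follows the pattern of Lemma 2.1 of \cite{MSW 10}, is the idea your proposal is missing.
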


\begin{proof}
Let $(f, h)$ be a unimodular pair in $(D(\delta_1) \cap H^\infty(\D))^2$, i.e., there exists $(g_1, g_2) \in (D(\delta_1) \cap H^\infty(\D))^2$ such that $f g_1 + h g_2 = 1$. Then $\inf_{z\in\D} |f(z)| + |h(z)|:=\eta>0$.

\underline{{\it Case} 1.} If $f(1) \neq 0$, then we claim $(f, (f-f(1))h)$ is unimodular.

In fact, if $z\in\D$ is such that $|f(z) -f(1)| \geq \frac{|f(1)|}{2}$, then $|f(z)| + |(f(z)-f(1)h(z)| \geq |f(z)| + \frac{|f(1)|}{2} |h(z)| \geq \min\{1, \frac{|f(1)|}{2}\} \eta$.

If $z\in\D$ is such that $|f(z) -f(1)| \leq \frac{|f(1)|}{2}$, then $|f(z)| = |f(z) - f(1) + f(1)| \geq |f(1)| - |f(z) - f(1)| \geq |\frac{|f(1)|}{2}|$, and so $|f(z)| + |(f(z)-f(1)h(z)| \geq |f(z)| \geq |\frac{|f(1)|}{2}|$.

Thus, $(f, (f-f(1))h)$ is unimodular. By Theorem 1 in \cite{Tr 92}, there is some element $g \in H^\infty(\D)$ such that $f + g [(f-f(1))h]$ is invertible in $H^\infty(\D)$. Note that $g(f-f(1)) \in D(\delta_1) \cap H^\infty(\D)$, by the corona theorem for $M(D(\delta_1))$, we get that $f + g [(f-f(1))h]$ is also invertible in $D(\delta_1) \cap H^\infty(\D)$.

\underline{{\it Case} 2.} If $f(1) = 0$, then $h(1) \neq 0$, since $\inf_{z\in\D} |f(z)| + |h(z)|:=\eta>0$. We claim the pair $(f+h,h)$ is unimodular: By the corona theorem for $M(D(\delta_1))$, there exists $(g_1, g_2) \in (D(\delta_1) \cap H^\infty(\D))^2$ such that $f g_1 + h g_2 = 1$, so $(f+h) g_1 + h (g_2 - g_1) = 1$, which implies $(f+h,h)$ is unimodular.

By Case 1, there exists some $g \in D(\delta_1) \cap H^\infty(\D)$, such that $(f+h) + gh$ is invertible in $D(\delta_1) \cap H^\infty(\D)$. Note that $(f+h) +gh= f + (1+g)h$, and $1+g \in D(\delta_1) \cap H^\infty(\D)$, we are done.

\end{proof}

Now we show the Bass stable rank of $M(D(\delta_{\zeta_1}) \cap D(\delta_{\zeta_2})) = D(\delta_{\zeta_1}) \cap D(\delta_{\zeta_2}) \cap H^\infty(\D)$ is one.
\begin{lemma}\label{bsrdd2}
The Bass stable rank of $D(\delta_{\zeta_1}) \cap D(\delta_{\zeta_2}) \cap H^\infty(\D)$ is one.
\end{lemma}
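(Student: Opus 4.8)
The plan is to mimic the proof of Lemma \ref{bsrdd1}, replacing the single ``killing factor'' $f-f(1)$ by a product that vanishes at \emph{both} $\zeta_1$ and $\zeta_2$, and to reduce the degenerate cases (where $f$ vanishes at one or both $\zeta_i$) to the nondegenerate one by a constant shift. Write $A:=D(\delta_{\zeta_1})\cap D(\delta_{\zeta_2})\cap H^\infty(\D)=M(D(\mu_2))$, and let $(f,h)$ be a unimodular pair in $A^2$, so that $\eta:=\inf_{z\in\D}(|f(z)|+|h(z)|)>0$. Since $f,h\in D(\delta_{\zeta_i})$, they have nontangential limits $f(\zeta_i),h(\zeta_i)$ by Lemma \ref{ddel1fuc}(ii), and letting $z\to\zeta_i$ gives $|f(\zeta_i)|+|h(\zeta_i)|\ge\eta$ for $i=1,2$. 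First I would choose a constant $\lambda\in\C$ so that $\tilde f:=f+\lambda h$ satisfies $\tilde f(\zeta_1)\neq0$ and $\tilde f(\zeta_2)\neq0$: for each $i$ with $h(\zeta_i)\neq0$ this excludes the single value $\lambda=-f(\zeta_i)/h(\zeta_i)$, while for each $i$ with $h(\zeta_i)=0$ one has $f(\zeta_i)\neq0$, so $\tilde f(\zeta_i)=f(\zeta_i)\neq0$ automatically; thus at most two values of $\lambda$ are forbidden and a good $\lambda$ exists. Note $(\tilde f,h)$ is again unimodular, since $\tilde f g_1+h(g_2-\lambda g_1)=1$.

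Next I would set
\[
v:=(\tilde f-\tilde f(\zeta_1))(\tilde f-\tilde f(\zeta_2))\in A,
\]
which vanishes at both $\zeta_1$ and $\zeta_2$, and claim that $(\tilde f,vh)$ is unimodular. To see this, put $c_0:=\min(|\tilde f(\zeta_1)|,|\tilde f(\zeta_2)|)>0$ and $\tilde\eta:=\inf_{z\in\D}(|\tilde f(z)|+|h(z)|)>0$, and fix $\delta:=\tfrac12\min(c_0,\tilde\eta)$. If $|\tilde f(z)|\ge\delta$ there is nothing to prove; if $|\tilde f(z)|<\delta$, then $|v(z)|\ge(|\tilde f(\zeta_1)|-\delta)(|\tilde f(\zeta_2)|-\delta)\ge(c_0/2)^2$ and $|h(z)|\ge\tilde\eta-\delta\ge\tilde\eta/2$, so $|\tilde f(z)|+|v(z)h(z)|$ is bounded below by a positive constant. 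This is the exact two-point analogue of the estimate used in Case 1 of Lemma \ref{bsrdd1}, with the product ensuring a uniform lower bound precisely because both $\tilde f(\zeta_1)$ and $\tilde f(\zeta_2)$ are nonzero.

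Finally, since $(\tilde f,vh)$ is unimodular in $H^\infty(\D)$ and the Bass stable rank of $H^\infty(\D)$ is one by Treil \cite{Tr 92}, there is $g\in H^\infty(\D)$ with $\tilde f+g(vh)$ invertible in $H^\infty(\D)$. Because $v\in A$ vanishes at $\zeta_1$ and $\zeta_2$, Lemma \ref{mulalgduk} gives $gv\in D(\delta_{\zeta_i})$ for $i=1,2$, and hence $gv\in A$; thus $\tilde f+(gv)h\in A$ is invertible in $H^\infty(\D)$, and the corona theorem for $M(D(\mu_2))$ (Corollary \ref{corothmukcor}, applied to the single function $\tilde f+(gv)h$) upgrades this to invertibility in $A$. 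Since $\tilde f+(gv)h=f+(\lambda+gv)h$ with $\lambda+gv\in A$, the pair $(f,h)$ is reducible, so the Bass stable rank of $A$ is one. I expect the main obstacle to be exactly this transfer step: the correction term must land in $A$, which forces the killing factor $v$ to vanish at both boundary points and thereby dictates its product form, and one must then invoke the corona theorem to move invertibility from $H^\infty(\D)$ back into $A$; the constant-shift device is what lets a single argument cover the degenerate cases $f(\zeta_i)=0$ uniformly.
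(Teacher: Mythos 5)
Your proof is correct, but it is organized quite differently from the paper's. The paper proves this lemma by reduction to Lemma \ref{bsrdd1}: it splits into the cases $f(\zeta_2)\neq 0$ and $f(\zeta_2)=0$, uses the one-root killing factor $f-f(\zeta_2)$ so that $(f,(f-f(\zeta_2))h)$ is unimodular over the smaller algebra $D(\delta_{\zeta_1})\cap H^\infty(\D)$, invokes the stable-rank-one result already established there to obtain $g$ with $f+g(f-f(\zeta_2))h$ invertible in $D(\delta_{\zeta_1})\cap H^\infty(\D)$, notes that $g(f-f(\zeta_2))$ lies in $A:=D(\delta_{\zeta_1})\cap D(\delta_{\zeta_2})\cap H^\infty(\D)$, and transfers invertibility into $A$ by the corona theorem; the degenerate case $f(\zeta_2)=0$ is handled by passing to the pair $(f+h,h)$. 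You bypass Lemma \ref{bsrdd1} and this iterative scheme entirely: your constant shift $\tilde f=f+\lambda h$ (with $\lambda$ avoiding at most two forbidden values) makes both boundary values nonzero simultaneously, eliminating the case split; your two-root factor $v=(\tilde f-\tilde f(\zeta_1))(\tilde f-\tilde f(\zeta_2))$ vanishes at both points, which is exactly what Lemma \ref{mulalgduk} needs to put $gv$ in $A$; and you then apply Treil's theorem directly in $H^\infty(\D)$ followed by the same corona transfer. Your lower-bound estimate showing $(\tilde f,vh)$ satisfies the corona condition, and the final identity $\tilde f+(gv)h=f+(\lambda+gv)h$, are both sound. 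The trade-off: the paper's route reuses Lemma \ref{bsrdd1} verbatim and keeps each step minimal, feeding an induction on the number of point masses, whereas your one-shot argument is self-contained and generalizes immediately to $k$ points (take $v=\prod_{i=1}^k(\tilde f-\tilde f(\zeta_i))$ and exclude at most $k$ values of $\lambda$), so it would deliver Theorem \ref{basssrduk} directly, without the induction that the paper only sketches.
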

\begin{proof}
Let $(f, h)$ be a unimodular pair in $(D(\delta_{\zeta_1}) \cap D(\delta_{\zeta_2}) \cap H^\infty(\D))^2$.

\underline{{\it Case} 1.}  $f(\zeta_2) \neq 0$. As in Lemma \ref{bsrdd1} we conclude that $(f, (f-f(\zeta_2))h)$ is unimodular. Then by Lemma \ref{bsrdd1}, there exists some $g \in D(\delta_1) \cap H^\infty(\D)$ such that $f + g[(f-f(\zeta_2))h]$ is invertible in $D(\delta_1) \cap H^\infty(\D)$. Note that $g (f-f(\zeta_2)) \in D(\delta_{\zeta_1}) \cap D(\delta_{\zeta_2}) \cap H^\infty(\D)$, by the corona theorem for $M(D(\delta_{\zeta_1}) \cap D(\delta_{\zeta_2}))$, we get $f + g [(f-f(1))h]$ is also invertible in $D(\delta_{\zeta_1}) \cap D(\delta_{\zeta_2}) \cap H^\infty(\D)$.

\underline{{\it Case} 2.} $f(\zeta_2) = 0$. As in Lemma \ref{bsrdd1}, we consider the pair $(f+h,h)$ and conclude that the Bass stable rank of $D(\delta_{\zeta_1}) \cap D(\delta_{\zeta_2}) \cap H^\infty(\D)$ is one.
\end{proof}

For general $k$, by induction we obtain that the Bass stable rank of $M(D(\mu_k))$ is one.

\begin{theorem}\label{basssrduk}
The Bass stable rank of $M(D(\mu_k))$ is one.
\end{theorem}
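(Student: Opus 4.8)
The plan is to prove the result by induction on $k$, exactly mirroring the structure already established in Lemmas \ref{bsrdd1} and \ref{bsrdd2}. The base case $k=1$ is Lemma \ref{bsrdd1}, and the case of two point masses (which I treat as the representative inductive step) is Lemma \ref{bsrdd2}. To pass from $M(D(\mu_{k-1}))$ to $M(D(\mu_k))$, where $\mu_k = \mu_{k-1} + a_k \delta_{\zeta_k}$, I would take an arbitrary unimodular pair $(f,h) \in M(D(\mu_k))^2$ and reduce it to an invertible single element, splitting into two cases according to the value $f(\zeta_k)$. Since $M(D(\mu_k)) = D(\mu_k) \cap H^\infty(\D) = M(D(\mu_{k-1})) \cap D(\delta_{\zeta_k})$, the inductive hypothesis gives that $M(D(\mu_{k-1}))$ has Bass stable rank one, and Theorem \ref{corothmuk} (together with Proposition \ref{corothmdu2}) supplies the corona theorem for $M(D(\mu_k))$.

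In \underline{{\it Case} 1}, where $f(\zeta_k) \neq 0$, the first step is to verify that the pair $\big(f, (f-f(\zeta_k))h\big)$ is unimodular in $M(D(\mu_k))^2$. This is the same elementary estimate as in Lemma \ref{bsrdd1}: one splits the disc according to whether $|f(z)-f(\zeta_k)|$ is at least or at most $\tfrac{|f(\zeta_k)|}{2}$, and in either region bounds $|f(z)| + |(f(z)-f(\zeta_k))h(z)|$ below by a positive constant times the corona constant $\eta$ of the original pair. Next, by the inductive hypothesis that $M(D(\mu_{k-1}))$ has stable rank one, there exists $g \in M(D(\mu_{k-1}))$ with $f + g\,[(f-f(\zeta_k))h]$ invertible in $M(D(\mu_{k-1}))$. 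The crucial observation is that $g(f-f(\zeta_k)) \in D(\delta_{\zeta_k})$: indeed $f-f(\zeta_k)$ vanishes at $\zeta_k$, so by Lemma \ref{mulalgduk} multiplying it by the bounded function $g$ keeps it in $D(\delta_{\zeta_k})$, whence $g(f-f(\zeta_k)) \in M(D(\mu_{k-1})) \cap D(\delta_{\zeta_k}) = M(D(\mu_k))$. Thus $f + g[(f-f(\zeta_k))h] \in M(D(\mu_k))$, it is invertible in the larger algebra $M(D(\mu_{k-1}))$, and by the corona theorem for $M(D(\mu_k))$ its invertibility descends to $M(D(\mu_k))$ itself.

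In \underline{{\it Case} 2}, where $f(\zeta_k) = 0$, I would argue exactly as in Lemma \ref{bsrdd2}: the corona condition $\inf_{z\in\D}(|f(z)|+|h(z)|) > 0$ forces $h(\zeta_k) \neq 0$, and the pair $(f+h, h)$ is unimodular (from $fg_1 + hg_2 = 1$ one writes $(f+h)g_1 + h(g_2-g_1) = 1$ using the corona theorem for $M(D(\mu_k))$ to produce the multipliers). Since $(f+h)(\zeta_k) = h(\zeta_k) \neq 0$, Case 1 applies to $(f+h, h)$ and yields $g \in M(D(\mu_k))$ with $(f+h) + gh = f + (1+g)h$ invertible in $M(D(\mu_k))$; as $1+g \in M(D(\mu_k))$, this exhibits the required reduction. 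The main obstacle, and the only genuinely non-routine point, is ensuring at each stage that the multiplier produced by the stable-rank-one hypothesis for $M(D(\mu_{k-1}))$ actually lands back in $M(D(\mu_k))$ after multiplication by $f - f(\zeta_k)$; this is precisely where the vanishing at $\zeta_k$ and Lemma \ref{mulalgduk} are indispensable, and it is also why the corona theorem for $M(D(\mu_k))$ (rather than merely for $M(D(\mu_{k-1}))$) is needed to transfer invertibility back down.
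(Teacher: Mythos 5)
Your proposal is correct and takes essentially the same approach as the paper: the paper's own proof of this theorem is just the one-line remark that the general case follows by induction from the pattern of Lemmas \ref{bsrdd1} and \ref{bsrdd2}, and your inductive step (splitting on $f(\zeta_k)\neq 0$ versus $f(\zeta_k)=0$, using Lemma \ref{mulalgduk} and the vanishing of $f-f(\zeta_k)$ at $\zeta_k$ to place $g(f-f(\zeta_k))$ back in $M(D(\mu_k))$, then descending invertibility via the corona theorem for $M(D(\mu_k))$) is exactly that pattern written out. You actually supply more detail than the paper itself does.
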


\vspace{1cm}
\noindent \textbf{Acknowledges.}
This work appeared as part of the author's doctoral dissertation at the University of Tennessee under the supervision of Dr. Stefan Richter. The author would like to thank Dr. Richter for many useful conversations.
\vspace{1cm}

\end{document}